\documentclass{amsart}
\usepackage[margin=2.4cm]{geometry}
\usepackage{xcolor}
\usepackage{graphics, amsmath,amssymb, enumerate, comment, url, mathtools}
\usepackage{graphicx}
\usepackage{epsfig}
\usepackage{hyphenat}
\usepackage{amsmath,amsthm}
\usepackage{amssymb,latexsym}
\usepackage{enumerate}
\usepackage{amssymb}
\usepackage{mathrsfs}
\usepackage{amsfonts}

\newcommand{\R}{\mathbb{R}}

\newcommand{\N}{\mathbb{N}}
\newcommand{\mH}{\mathcal{H}}

\newtheorem{thm}{Theorem}[section]
\newtheorem{lem}[thm]{Lemma}
\newtheorem{prop}[thm]{Proposition}
\newtheorem{cor}[thm]{Corollary}

\theoremstyle{remark}

\usepackage{geometry}
\geometry{left=2.54cm,right=2.54cm}

\newcommand\hdim{\dim_H}

\numberwithin{equation}{section}

\begin{document}
\title[Convergence exponent of Dirichlet non-improvable numbers]
 {Convergence exponent of Dirichlet non-improvable numbers in the theory of continued fractions}

\author {Xiaoyan Tan}
\address{School of General Education and International Studies,
Chongqing Polytechnic University of Electronic Technology,
Chongqing, 401331, P. R. China}
\email{miniyan\_tan@163.com}

\author{Zhenliang Zhang$^{*}$}
\address{School of Mathematical Sciences, Chongqing Normal University, Chongqing, 401331, P. R. China}
\email{zhliang\_zhang@163.com}

\thanks {* Corresponding author}

\begin{abstract}\sloppy
Let $x \in [0,1)$ be an irrational number with continued fraction expansion
$[a_1(x),a_2(x), \cdots,a_n(x),\cdots]$
and $q_n(x)$ be the denominator of its $n$-th convergent.
 We establish,
for any $\alpha,\beta$ in $[0,+\infty]$,
the Hausdorff dimension formula of the intersections of the sets of Dirichlet non-improvable numbers
and the level set of convergent exponent,
i.e.
$$
G(\alpha,\beta):
=\left\{x\in[0,1)\colon \tau(x)=\alpha,\,\,\text{and} \,\,
\limsup_{n\to\infty}\frac{\log (a_n(x)a_{n+1}(x))}{\log q_n(x)}\geq\beta\right\},
$$
and
$$
E(\alpha,\beta):
=\left\{x\in[0,1)\colon \tau(x)=\alpha,\,\,\text{and} \,\,
\limsup_{n\to\infty}\frac{\log (a_n(x)a_{n+1}(x))}{\log q_n(x)}=\beta\right\},
$$
where
$$
\tau(x):= \inf\Big\{s \geq 0: \sum_{n \geq 1} a^{-s}_n(x)<\infty\Big\}.
$$

\end{abstract}

\keywords{Continued fractions, irrationality exponent, convergence exponent, Hausdorff dimension}

\subjclass[2010]{11K55, 28A80} \maketitle

\addtocounter{section}{0}
\section{Introduction}

How well an irrational number can be approximated by rationals is a long standing question in number theory,
which is one of the main topics of research of Diophantine approximation.
Among various kinds of expansions of real number,
continued fraction expansions provide a quick and efficient way to find good rational approximations to real numbers.
Let us begin by recalling that each irrational number $x \in [0,1)$ admits a unique  \emph{continued fraction expansion} of the form
\begin{equation}
\label{CF def}
x = \dfrac{1}{a_1(x) +\dfrac{1}{a_2(x)+\ddots+\dfrac{1}{a_n(x)+\ddots}}}=[a_1(x), a_2(x),\cdots, a_n(x),\cdots],
\end{equation}
where $a_1(x),a_2(x),a_3(x),\cdots$ are positive integers, called the \emph{partial quotients} of the continued fraction expansion of $x$.
The $n$-th convergent $p_n(x)/q_n(x)$ of $x$ is defined by the finite truncation of (\ref{CF def}):
$$
p_n(x)/q_n(x)=[a_1(x), a_2(x),\cdots, a_n(x)].
$$

The convergents give the best rational approximations and the rate of approximation
of the sequence of convergents can be characterized by
$$
\frac{1}{(a_{n+1}(x)+2)q_n^2(x)}
\leq \left|x-\frac{p_n(x)}{q_n(x)}\right|
\leq \frac{1}{a_{n+1}(x)q_n^2(x)}.
$$
This implies that
the asymptotic Diophantine properties of a points $x\in[0,1)$ can be reflected by the growth rate of its partial quotients.

Due to the tight connection between Diophantine approximation and continued fraction expansion,
Hausdorff dimensions of sets of  points obeying some restrictions on their partial quotients
have been computed extensively in recent years.
It seems that the earliest research in this direction can be traced back to Jarn\'ik's paper \cite{Jarnik1928},
in which he studied the set of points whose partial quotients are bounded,
i.e.
the set of badly approximable points from the point of view of Diophantine approximation.
Later on,
Good in \cite{Good} gave an overall investigation of sets with some restrictions on their partial quotients,
including the set $\{x\in [0,1)\colon \lim_{n\to\infty}a_n(x)=+\infty\}$.
For more results of this kind,
one is referred to the work of  Feng, Wu, Liang and Tseng \cite{Fengwuliang},
Hirst \cite{Hirst}, {\L}uczak \cite{Luczak},
and Wang and Wu \cite{Wangwu}.
Especially, in \cite{Wangwu},
let $\phi:\mathbb{N}\to\mathbb{R}^+$ be a positive function,
the authors considered the set
$\{x\in[0,1)\colon a_n(x)\geq \phi(n)\; \text{for infinitely many} \; n\}$
and determined its Hausdorff dimension completely.

Another way of reflection of the growth rate of the partial quotients is the convergence exponent of its sequence.
The convergence exponent of the sequence of partial quotients of $x$ in continued fractions
(see P\'{o}lya and Szeg\H{o} \cite[p.\,26]{PS72}),
given by
\begin{equation}\label{tdy}
\tau(x):= \inf\Big\{s \geq 0: \sum_{n \geq 1} a^{-s}_n(x)<\infty\Big\},
\end{equation}
reflects how fast the growth rate of partial quotients tending to infinity in some sense.
By Borel-Cantelli lemma,
it can be easily verified that
infinitely many of $a_n(x)$ equal to 1 for Lebesgue amlost all $x\in[0,1)$.
This implies that $\tau(x)=\infty$ for Lebesgue almost all $x\in[0,1)$.
The exceptional set $\Delta(\alpha)=\big\{x\in[0,1):\ \tau(x)=\alpha\big\}$ for this property
has been studied by Fang, Ma, Song and Wu \cite[Theorem 2.1]{FMSW21} recently,
and it was proved that for any $0\leq\alpha<\infty$,
$
\hdim \Delta(\alpha)=\frac{1}{2}.
$
%Very recently,
%Fang, Ma, Song and Yang \cite{FMSY24} also considered the convergence exponent of the consecutive partial quotients sequence
%$\{a_n(x)\cdots a_{n+m}(x)\}_{n\geq1}$
%and obtained the Hausdorff dimension of its level sets.
Here and in the sequel,
the notation $\hdim$ means the Hausdorff dimension.
In addition,
the intersection of $\Delta(\alpha)$ and some other fractal sets such as
the set of points with non-decreasing partial quotients \cite{FMSW21},
level sets of the irrational exponent \cite{STZ24} have been studied.
More precisely in \cite{STZ24},
the authors characterized the Hausdorff dimension of the intersection of the set $\Delta(\alpha)$  and the set
$$
\Big\{x\in[0,1)\colon \limsup_{n\to\infty}\frac{\log a_{n+1}(x)}{\log q_n(x)}=\beta\Big\}\ \ (\beta\geq0)
$$
and proved that the Hausdorff dimension of the intersection is the product of their Hausdorff dimensions.

%As far as  we know,
%various exponents related to continued fractions have been recently investigated by many authors.
%To mention a few of them, we refer to Pollicott and Weiss \cite{lesPW99}
% for the Lyapunov exponent of Gauss map,
%Kesseb\"{o}hmer and Stratmann \cite{lesKS} for the Minkowski's question mark function,
%Fan, Liao, Wang and Wu \cite{FLWW09} for the Khintchine exponent,
%Jaffard and Martin \cite{lesJM} for the Brjuno function.

For the uniform Diophantine properties of a points $x\in[0,1)$,
Dirichlet's theorem is a fundamental result.
\begin{thm}[Dirichlet, 1842]
Given $x\in \mathbb{R}$ and $Q>1$,
there exist integers $p,q$ such that
$$
|qx-p|\leq \frac{1}{Q}\ \ \ \text{and}\ \ \ 1\leq q<Q.
$$
\end{thm}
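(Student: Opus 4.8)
The plan is to deduce the theorem from the pigeonhole principle---indeed this is the prototypical application of what is now called Dirichlet's box principle. The two quantities to control are the precision $1/Q$ and the range $1\le q<Q$, and the whole argument hinges on choosing the number of boxes so that both constraints come out correctly. First I would fix the integer $N=\lceil Q\rceil$, so that $N\ge 2$, $\tfrac1N\le\tfrac1Q$, and $N-1<Q$; these three elementary inequalities are exactly what is needed to convert the crude pigeonhole output into the sharp bounds of the statement.

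Next I consider the $N+1$ fractional parts $\{jx\}=jx-\lfloor jx\rfloor\in[0,1)$ for $j=0,1,\dots,N$, and I partition $[0,1)$ into the $N$ half-open intervals $J_k=\big[\tfrac{k-1}{N},\tfrac{k}{N}\big)$, $k=1,\dots,N$. Since there are $N+1$ points and only $N$ intervals, two of the points, say $\{j_1x\}$ and $\{j_2x\}$ with $0\le j_1<j_2\le N$, must lie in a common $J_k$. Setting $q=j_2-j_1$ and $p=\lfloor j_2x\rfloor-\lfloor j_1x\rfloor$ then produces integers with $1\le q\le N$ and
$$
|qx-p|=\big|\{j_2x\}-\{j_1x\}\big|<\frac1N\le\frac1Q,
$$
which already secures the approximation bound.

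The main obstacle is the boundary case, since pigeonhole only guarantees $q\le N$, whereas the statement demands the strict inequality $q<Q$, and when $Q$ is not an integer one could a priori be forced into $q=N>Q$. This can occur only if the sole coincident pair is $(j_1,j_2)=(0,N)$, i.e. both $\{0\cdot x\}=0$ and $\{Nx\}$ fall in the first interval $J_1$ while the remaining points $\{x\},\dots,\{(N-1)x\}$ occupy $J_2,\dots,J_N$ one apiece. In that situation I would not use the pair $(0,N)$ at all; instead I take the unique point $\{j^{*}x\}$ lying in the last interval $J_N=\big[\tfrac{N-1}{N},1\big)$, where $1\le j^{*}\le N-1$. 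Then $1-\{j^{*}x\}\le\tfrac1N$, so with $q=j^{*}$ and $p=\lfloor j^{*}x\rfloor+1$ one gets $|qx-p|=1-\{j^{*}x\}\le\tfrac1N\le\tfrac1Q$ together with $q\le N-1<Q$. Combining the two cases delivers integers $p,q$ with $1\le q<Q$ and $|qx-p|\le\tfrac1Q$, completing the argument.
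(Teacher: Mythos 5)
Your proof is correct, but there is nothing in the paper to compare it against: the paper quotes Dirichlet's 1842 theorem as classical background (it is the starting point for the discussion of $\psi$-Dirichlet improvable numbers) and gives no proof of it. Your argument is the standard pigeonhole proof, with the extra endpoint refinement that this particular formulation genuinely requires: since $Q$ is an arbitrary real number and the conclusion demands the \emph{strict} inequality $q<Q$ together with the precision $1/Q$ (not merely $1/N$ for some convenient integer $N$), the naive count ``$N+1$ points in $N$ boxes'' can indeed return the forbidden value $q=N=\lceil Q\rceil$, and your resolution via the unique occupant of the last box $J_N$, with $p=\lfloor j^{*}x\rfloor+1$ and $1-\{j^{*}x\}\le 1/N\le 1/Q$, handles it correctly; this is the classical refinement one finds in the textbook treatments that sharpen $q\le N$, $|qx-p|<1/N$ to $q\le N$, $|qx-p|\le 1/(N+1)$. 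One sentence of yours deserves tightening: you frame the boundary case as arising ``when $Q$ is not an integer,'' but when $Q$ is an integer one has $N=\lceil Q\rceil=Q$, and the pigeonhole output $q=N=Q$ equally violates $q<Q$. Fortunately your actual case split does not depend on that framing and is exhaustive as written: either some coincident pair has $j_2-j_1\le N-1$, giving $q\le N-1<Q$ directly, or the sole coincidence is $(0,N)$, in which case your counting argument (the $N-1$ points $\{x\},\dots,\{(N-1)x\}$ fill $J_2,\dots,J_N$ one apiece) guarantees the last box is occupied and the endpoint trick applies. So the proof is complete for all real $Q>1$; only that one motivating sentence should be corrected.
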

Let $\psi:\mathbb{N}\to \mathbb{R}^+$ be a non-increasing function.
A real number $x\in[0,1)$ is called $\psi$-Dirichlet improvable if for all large enough $Q>1$,
there exist $1\leq q<Q$ and $p\in \mathbb{N}$ such that $|qx-p|<\psi(Q)$.
Denote by $D(\psi)$ the set of $\psi$-Dirichlet improvable numbers.
The elements of the complementary  set $D^c(\psi)$ are
called $\psi$-Dirichlet non-improvable numbers.
For the metric theory of the set $D(t\to\frac{c}{t})$,
Davenprot and Schmidt \cite{DS1970} first proved that
the set $D(t\to\frac{c}{t})$ is of Lebesgue measure zero for any $c<1$ by
showing that $D(t\to\frac{c}{t})$ is a subset of the union of the set of rationals
and the set of irrationals with bounded partial quotients in their continued fraction expansions.
Recently,
Kleinbock and Wadleigh \cite{KW2018} found that
whether a number $x\in D(\psi)$ or not is closely related to
the relative growth of the product of two consecutive partial quotients
 $a_n(x)a_{n+1}(x)$ compared with $q_n(x)$.
More precisely,
suppose that $q\psi(q)<1$ for $q\in \mathbb{N}$,
then
\begin{align*}
\{x\in[0,1)\colon &a_n(x)a_{n+1}(x)\geq \Psi(q_n(x))\,\  \  \text{for}\,\, i.m. \ \ n\in \mathbb{N}\}\subset D^c(\psi)\\
 & \subset\{x\in[0,1)\colon a_n(x)a_{n+1}(x)\geq 4^{-1}\Psi(q_n(x))\,\  \  \text{for}\,\, i.m. \ \ n\in \mathbb{N}\},
\end{align*}
where ``\emph{i.m.}'' stands for infinitely many and
$\Psi(q_n)=q\psi(q)(1-q\psi(q))^{-1}$ for $q\in\mathbb{N}$.
Roughly speaking,
the $\psi$-Dirichlet non-improvable set $D^c(\psi)$,
expressed in terms of continued fractions,
can be written as
$$
G(\Psi):=\{x\in [0,1)\colon a_n(x)a_{n+1}(x)\geq \Psi(q_n(x)) \,\  \  \text{for}\,\, i.m. \ \ n\in \mathbb{N}\}.
$$
The zero-one law for the Lebesgue measure has been established by Kleinbock-Wadleigh \cite{KW2018}.
In Hussain, Kleinbock, Wadleigh and Wang \cite{HKWW2018},
the authors established the zero-infinity law of the Hausdorff measure of $G(\Psi)$ as follows.
\begin{thm}[{\cite{HKWW2018}}, Theorem 1.8]
\label{thmHau}
Let $\Psi:\N\to\R^+$ be a non-decreasing function.
Then for any $0\leq s<1$,
\begin{equation*}
\mH^s(G(\Psi))=
\begin{cases}
0,\ \ \ \ \ \ \ \ \ \ \ \ \ \text{if}\ \ \sum_{t\geq1}t^{1-2s}\Psi(t)^{-s}<\infty,\\
\infty,\ \ \ \ \ \ \ \ \ \ \ \text{if}\ \ \sum_{t\geq1}t^{1-2s}\Psi(t)^{-s}=\infty,
\end{cases}
\end{equation*}
where $\mH^s$ denotes the $s$-dimensional Hausdorff measure.
\end{thm}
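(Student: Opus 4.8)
The plan is to work entirely on the continued-fraction side, writing $G(\Psi)=\limsup_{n\to\infty}A_n$ with $A_n:=\{x\in[0,1)\colon a_n(x)a_{n+1}(x)\geq\Psi(q_n(x))\}$, and to prove the two halves of the dichotomy separately. Everything rests on a cylinder-by-cylinder description of $A_n$. Fix an $n$-cylinder $I_n=I_n(a_1,\dots,a_n)$, on which $q_n$ is constant; since $|I_n|\asymp q_n^{-2}$ and the $(n+1)$-cylinders inside $I_n$ indexed by $a_{n+1}$ have length $\asymp(a_{n+1}^2q_n^2)^{-1}$, the part of $A_n$ lying in $I_n$ is either all of $I_n$ (when $a_n\geq\Psi(q_n)$, so that every $a_{n+1}\geq1$ qualifies) or, when $a_n<\Psi(q_n)$, a single sub-interval $T(I_n)$ with
\[
|T(I_n)|\asymp\sum_{a_{n+1}\geq\Psi(q_n)/a_n}\frac{1}{a_{n+1}^2q_n^2}\asymp\frac{a_n}{\Psi(q_n)\,q_n^{2}} .
\]
The factor $a_n$ here is the whole point: because the condition involves the product $a_na_{n+1}$ rather than $a_{n+1}$ alone, the size of the local target depends on the last partial quotient of the host cylinder, and it is this non-uniformity that makes the problem more delicate than a Jarn\'ik--Besicovitch set.

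For the convergence half I would use $\{T(I_n)\}_{n\geq N}$, together with the full cylinders arising from $a_n\geq\Psi(q_n)$, as a cover of $G(\Psi)$ and apply the Hausdorff--Cantelli lemma, so that
\[
\mH^s\!\big(G(\Psi)\big)\ \lesssim\ \liminf_{N\to\infty}\ \sum_{n\geq N}\ \sum_{I_n}\ |T(I_n)|^{s}.
\]
It is essential to cover by the merged intervals $T(I_n)$ rather than by the individual $(n+1)$-cylinders: for $s<1$ the coarse cover is the efficient one (it is smaller by a factor $(\Psi(q_n)/a_n)^{s-1}\leq1$ per cylinder), and this is precisely what turns the divergent harmonic-type sum over $a_n$ into the convergent $\sum_{a_n}a_n^{-s}\asymp\Psi(q_n)^{1-s}$, thereby removing the logarithmic factor that appears in the Lebesgue (Kleinbock--Wadleigh) criterion and accounting for the restriction $s<1$. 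Carrying out the summation over $a_n$ inside each cylinder and reorganising the double sum over $n$ and over cylinders by the dyadic value of $q_n$, with the standard distortion and counting estimates for continued-fraction denominators, the covering sum is shown to be comparable to $\sum_{t\geq1}t^{1-2s}\Psi(t)^{-s}$; convergence of this series then forces $\mH^s(G(\Psi))=0$.

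For the divergence half the goal is $\mH^s(G(\Psi))=\infty$, and here the natural tool is the Mass Transference Principle of Beresnevich and Velani, which upgrades a full-Lebesgue-measure statement for a limsup of ``$s$-enlarged'' intervals to a full-$\mH^s$-measure statement for the original limsup set. The obstruction to applying it directly is exactly the non-uniformity of $|T(I_n)|$ recorded above: $G(\Psi)$ is not a limsup of intervals all shrunk from their enlargements by one fixed exponent. I would remove this by a dyadic decomposition of the offending partial quotient, fixing a block $2^{j}\leq a_n<2^{j+1}$; on the corresponding sub-family the targets $T(I_n)$ have comparable normalised radii, so this sub-collection is a genuine limsup of intervals to which the principle applies. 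One then verifies its hypothesis---that the enlarged intervals form a limsup set of full Lebesgue measure in every subinterval---by a Borel--Cantelli argument exploiting the quasi-independence of cylinder events under the Gauss measure.

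The step I expect to be the main obstacle is making this last reduction quantitatively compatible with the target series. A single dyadic block, or a single scale, will in general not see the full divergence of $\sum_t t^{1-2s}\Psi(t)^{-s}$, so one must either locate, by a pigeonhole over the scales $j$, a block whose contribution already diverges, or build a Cantor subset of $G(\Psi)$ that mixes the scales while keeping the induced mass distribution under control and then invoke the mass distribution principle. Balancing the bookkeeping of the $a_n$-scales against the divergence of the series---so that the lower bound one constructs matches the upper bound from the convergence half---is the delicate core of the argument, and is precisely where the product structure $a_na_{n+1}$ has to be confronted rather than estimated away.
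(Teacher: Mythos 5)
This theorem is imported verbatim from \cite{HKWW2018}; the present paper contains no proof of it, so your proposal can only be compared with the argument in that source. Your convergence half is essentially the right (and essentially their) argument, with one quantitative caution. Writing $q_n\asymp a_nq_{n-1}$, the merged target has length $|T(I_n)|\asymp\big(\Psi(q_n)\,a_n\,q_{n-1}^2\big)^{-1}$, and if one crudely bounds $\Psi(q_n)\geq\Psi(q_{n-1})$ and sums $a_n^{-s}$ up to the cap $a_n\leq\Psi(q_n)$, the per-cylinder bound becomes $\Psi(q_{n-1})^{1-2s}q_{n-1}^{-2s}$, which aggregates to $\sum_t t^{1-2s}\Psi(t)^{1-2s}$ --- strictly weaker than the claimed series when $\tfrac12<s<1$ (for $\Psi(t)=t^{\beta}$ it gives the threshold $\tfrac{2+\beta}{2+2\beta}>\tfrac{2}{2+\beta}$). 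So your passing formula ``$\sum_{a_n}a_n^{-s}\asymp\Psi(q_n)^{1-s}$'' must not be combined with a $q_{n-1}$-based bookkeeping: the double sum has to be organized by the dyadic value of $q_n$ itself, so that a large last digit $a=a_n$ is paid for by the reduced count $\asymp(t/a)^2$ of admissible $(n-1)$-cylinders with $q_n\asymp t$; then
\begin{equation*}
\sum_{a\geq1}\Big(\frac{t}{a}\Big)^{2}\,\Psi(t)^{-s}a^{s}t^{-2s}\asymp t^{2-2s}\Psi(t)^{-s},
\end{equation*}
since $s-2<-1$, and the full cylinders with $a_n\geq\Psi(q_n)$ contribute at most $t^{2-2s}\Psi(t)^{-1}$, which is dominated. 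With that regrouping the cover sum is indeed comparable to $\sum_t t^{1-2s}\Psi(t)^{-s}$ and this half closes.

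The genuine gap is in the divergence half, and your own diagnosis of where it sits is accurate: the dyadic decomposition in $a_n$ followed by the Mass Transference Principle block-by-block does not close, because divergence of the total series need not be witnessed by any single block, and the fallback ``Cantor set mixing the scales'' is precisely the hard construction you have not carried out. The missing idea, which makes the problem dissolve rather than require solving, is that monotonicity gives $a_na_{n+1}\geq a_{n+1}$, hence
\begin{equation*}
\big\{x\in[0,1)\colon a_{n+1}(x)\geq\Psi(q_n(x))\ \text{for i.m. } n\big\}\subseteq G(\Psi),
\end{equation*}
and this subset \emph{is} a limsup of intervals with a uniform shrinking rule: one interval of length $\asymp\big(\Psi(q_n)q_n^2\big)^{-1}$ per $n$-cylinder. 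Its $s$-enlargement has length $\big(\Psi(q_n)q_n^2\big)^{-s}=\big(\tilde\Psi(q_n)q_n^2\big)^{-1}$ with $\tilde\Psi(t)=t^{2s-2}\Psi(t)^{s}$, i.e.\ the enlarged limsup set is a set of the same type with $\tilde\Psi$ in place of $\Psi$, and its full-Lebesgue-measure criterion $\sum_t\frac{1}{t\,\tilde\Psi(t)}=\sum_t t^{1-2s}\Psi(t)^{-s}=\infty$ is \emph{exactly} the divergence hypothesis (verified, as you anticipate, by a quasi-independence Borel--Cantelli argument for the Gauss measure, localized to every subinterval); the Mass Transference Principle then yields $\mH^s(G(\Psi))=\infty$ in one stroke. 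This is how \cite{HKWW2018} argue: the product structure $a_na_{n+1}$ is confronted only in the convergence half, while in the divergence half it is estimated away by the trivial inclusion --- precisely the step your plan tries to handle head-on, at the cost of an obstruction that the actual proof avoids rather than solves.
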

For any $\beta\geq 0$,
let
\begin{equation}
\label{G(beta)}
G_{\beta}:=\left\{x\in[0,1)\colon \limsup_{n\to\infty}\frac{\log (a_n(x)a_{n+1}(x))}{\log q_n(x)}\geq\beta\right\},
\end{equation}
and
\begin{equation}
\label{E(beta)}
E_{\beta}:=\left\{x\in[0,1)\colon \limsup_{n\to\infty}\frac{\log (a_n(x)a_{n+1}(x))}{\log q_n(x)}=\beta\right\}.
\end{equation}
Following \cite{Fengxu},
we call $G_{\beta}$ and $E_{\beta}$ the set of Dirichlet non-improvable numbers with order $\beta$ 
and the set of Dirichlet non-improvable numbers with exact order $\beta$ in continued fractions, respectively.
By Theorem \ref{thmHau},
we have
$$
\hdim G_{\beta}=\hdim E_{\beta}=\frac{2}{\beta+2}.
$$
Recently, for $\tau\geq0$ and $0\leq \alpha\leq \beta\leq +\infty$,
the Hausdorff dimension formula of level sets
$$
\left\{x\in[0,1)\colon \lim_{n\to\infty}\frac{\log (a_n(x)a_{n+1}(x))}{\log q_n(x)}=\tau\right\}
$$
and
$$
\left\{x\in[0,1)\colon \liminf_{n\to\infty}\frac{\log (a_n(x)a_{n+1}(x))}{\log q_n(x)}=\alpha,\ \
\limsup_{n\to\infty}\frac{\log (a_n(x)a_{n+1}(x))}{\log q_n(x)}=\beta\right\}
$$
have been established in Huang and Wu \cite{Huangwu}, and Feng and Xu \cite{Fengxu}.

%Multifractal analysis of sets characterized by two (or more) different Diophantine char-
%acteristics potentially could show that they are independent,
%or, conversely,
%it could help to detect profound links between these characteristics.
This paper is concerned with the intersection of  the set of Dirichlet non-improvable numbers
and the level sets generated by \eqref{tdy}.
To be precise,
we are interested in the Hausdorff dimension of
$$
G(\alpha,\beta):
=\left\{x\in[0,1)\colon \tau(x)=\alpha \,\,\text{and} \,\,
\limsup_{n\to\infty}\frac{\log (a_n(x)a_{n+1}(x))}{\log q_n(x)}\geq\beta\right\},
$$
and
$$
E(\alpha,\beta):
=\left\{x\in[0,1)\colon \tau(x)=\alpha \,\,\text{and} \,\,
\limsup_{n\to\infty}\frac{\log (a_n(x)a_{n+1}(x))}{\log q_n(x)}=\beta\right\}.
$$
Now, we are in a position to state the main results.
\begin{thm}\label{mainthm}
For any $0\leq\alpha\leq\infty$ and $\beta\geq0$, we have
\begin{equation*}
\hdim G(\alpha,\beta)=\hdim E(\alpha,\beta)=
\begin{cases}
\frac{2}{\beta+2},\ \ \ \ \ \ \ \ \ \ \ \ \ \, \ \alpha=\infty ;\cr
\frac{2}{\beta+2+\sqrt{\beta^2+4}},\ \ \ \ \ \ 0\leq\alpha<\infty.
\end{cases}
\end{equation*}
\end{thm}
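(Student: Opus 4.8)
The plan is to separate the trivial regime $\alpha=\infty$ from the substantive regime $0\le\alpha<\infty$, treating the lower bound in each case by an explicit Cantor construction and the upper bound by an optimized cover. For $\alpha=\infty$ the constraint $\tau(x)=\infty$ adds essentially nothing to the Dirichlet condition: on one side $G(\infty,\beta)\subseteq G_\beta$ and $E(\infty,\beta)\subseteq E_\beta$, so Theorem~\ref{thmHau} gives $\hdim\le\frac{2}{\beta+2}$; on the other side the extremal Cantor sets realizing $\hdim G_\beta=\frac{2}{\beta+2}$ can be built with partial quotients bounded by a parameter $N$ away from the sparse resonances, so each such set has a bounded subsequence of $a_n$ and hence satisfies $\tau=\infty$, placing it inside $G(\infty,\beta)$; letting $N\to\infty$ yields the matching lower bound, and since that construction already has exact order $\beta$ it also lies in $E(\infty,\beta)$. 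First I would record this reduction, after which the real content is the case $0\le\alpha<\infty$. The guiding observation there is that $\tau(x)<\infty$ forces $a_n(x)\to\infty$, while the precise value of $\alpha$ only prescribes the \emph{speed} of this divergence (roughly $a_n\asymp n^{1/\alpha}$ for $\alpha>0$, super-polynomial growth for $\alpha=0$) and thus influences only lower-order terms; the dimension should therefore equal $\hdim(G_\beta\cap\{a_n\to\infty\})$ for every finite $\alpha$, which is exactly why the formula does not depend on $\alpha$ there.

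For the upper bound I would cover the $\limsup$ set, intersected with $\{a_k\ge M\ \text{for all}\ k\}$, by natural intervals and then send $M\to\infty$. Using $q_n\asymp a_nq_{n-1}$ and $|I(a_1,\dots,a_n)|\asymp q_n^{-2}$, the naive cover of the resonance $\{a_na_{n+1}\ge q_n^{\beta}\}$ by the $a_{n+1}$-tail inside $I(a_1,\dots,a_n)$ has length $\asymp a_n^{-(\beta+1)}q_{n-1}^{-(\beta+2)}$, and the sum of its $s$-th power over $a_n\ge M$ diverges exactly when $(\beta+1)s\le1$ --- a range that contains the conjectured critical exponent, so the single-scale cover is insufficient. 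The decisive idea is a two-scale cover: fix a threshold $T=q_{n-1}^{\beta s/(1-\beta s)}$, cover the moderate quotients $M\le a_n\le T$ by $a_{n+1}$-tails, and cover the large quotients $a_n>T$ by the \emph{single} level-$(n-1)$ interval $\{a_n>T\}\subseteq I(a_1,\dots,a_{n-1})$ of length $\asymp(Tq_{n-1}^{2})^{-1}$. Balancing the two pieces in $T$ collapses the per-level sum to $\asymp q_{n-1}^{-\rho s}$ with $\rho=\frac{2-\beta s}{1-\beta s}$; summing over admissible prefixes produces a geometric series with ratio $\sum_{a\ge M}a^{-\rho s}$, which converges once $\rho s>1$. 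Since $\rho s>1\iff\beta s^{2}-(\beta+2)s+1<0$, the cover gives $\hdim\le s$ for every $s$ above the smaller root $s_-=\frac{2}{\beta+2+\sqrt{\beta^{2}+4}}$ of $\beta s^{2}-(\beta+2)s+1=0$.

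For the lower bound I would construct a Cantor set whose non-resonant ``free'' blocks use quotients $a_j\in[M_j,2M_j)$ with $M_j\to\infty$ chosen so that $\sum_jM_j^{-s}$ converges precisely for $s>\alpha$ (which fixes $\tau(x)=\alpha$), and whose sparse resonant positions $n_k$ carry the profile $a_{n_k}\asymp q_{n_k-1}^{\theta}$ and $a_{n_k+1}\asymp q_{n_k}^{\beta}/a_{n_k}$, so that $\frac{\log(a_{n_k}a_{n_k+1})}{\log q_{n_k}}=\beta$ exactly. The free blocks have local dimension $\tfrac12>s_-$ and, being overwhelmingly long, pin down $\tau$, while the huge resonant quotients are too rare to affect $\sum a_n^{-s}$; a short check of the ratios at $n_k-1$, $n_k$, $n_k+1$ confirms $\limsup_n\frac{\log(a_na_{n+1})}{\log q_n}=\beta$, so the set lies in $E(\alpha,\beta)$. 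For the natural measure $\mu$ (uniform over the free choices and spread over the resonant ones) the H\"older exponent is controlled by two critical ball radii at each resonance: the radius $\asymp q_{n_k-1}^{-(\theta+2)}$, at which a ball swallows the cluster of large $a_{n_k}$, gives local exponent $\frac{1}{\theta+2}$, and the radius $\asymp a_{n_k}q_{n_k}^{-(\beta+2)}$, at which a ball swallows the cluster of large $a_{n_k+1}$, gives local exponent $\frac{1+\theta}{(\beta+2)+(\beta+1)\theta}$. The first decreases and the second increases in $\theta$, so the best construction takes them equal; solving $\theta^{2}+(2-\beta)\theta-\beta=0$ for $\theta^{\ast}=\frac{\beta-2+\sqrt{\beta^{2}+4}}{2}$ makes both exponents equal to $s_-$, and the mass distribution principle then yields $\hdim\ge s_-$.

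The hard part will be this two-scale balancing, which underlies both bounds: I must verify that the two resonance radii really are the worst scales for $\mu$ --- that no intermediate radius, and no scale inside a free block, gives a smaller H\"older exponent --- and, dually, that the threshold split is the most economical cover. The reassuring consistency check is that the optimal cover threshold and the optimal construction exponent coincide, $\frac{\beta s_-}{1-\beta s_-}=\theta^{\ast}$, which is what pins the value to $s_-$. Finally, since $E(\alpha,\beta)\subseteq G(\alpha,\beta)$, the upper bound for $G$ bounds $E$ as well, and the construction above already delivers points of \emph{exact} order $\beta$; combined with letting the covering tolerance tend to $0$ and $M\to\infty$ in the covers, and with the standard fact that sparsifying the resonances of a $\limsup$ set leaves its dimension unchanged, this upgrades the two estimates to the asserted equalities $\hdim G(\alpha,\beta)=\hdim E(\alpha,\beta)=s_-$ throughout $0\le\alpha<\infty$ (the endpoint $\beta=0$ reducing to $\hdim\Delta(\alpha)=\tfrac12$).
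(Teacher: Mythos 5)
Your proposal is correct, and its heart --- the lower bound --- is the same construction as the paper's, in different notation: your resonance profile $a_{n_k}\asymp q_{n_k-1}^{\theta^*}$, $a_{n_k+1}\asymp q_{n_k}^{\beta}/a_{n_k}$ with $\theta^*=\frac{\beta-2+\sqrt{\beta^2+4}}{2}$ is exactly the paper's insertion of the pairs $c_j\asymp n_j^{\lambda-1}$, $e_j\asymp n_j^{\lambda^2-\lambda}$ at places where $q\approx n_j$, since $\lambda=\theta^*+1=\frac{\beta+\sqrt{\beta^2+4}}{2}$ and $\beta(1+\theta^*)-\theta^*=\lambda^2-\lambda$ (use $\beta\lambda=\lambda^2-1$); your free blocks $a_j\in[M_j,2M_j)$ with $\sum_j M_j^{-s}$ critical at $\alpha$ are the paper's $F(\alpha)$ with $a_n\asymp n^{1/\alpha}$ (and $e^n$ for $\alpha=0$), and your two critical radii $\asymp q_{n_k-1}^{-(\theta+2)}$ and $\asymp a_{n_k}q_{n_k}^{-(\beta+2)}$, with exponents $\frac{1}{\theta+2}$ and $\frac{1+\theta}{(\beta+2)+(\beta+1)\theta}$ equalized at $\theta^*$, are precisely the two cases in the paper's ball estimate via the gap bounds \eqref{eq4}--\eqref{eq6}; the residual verification you flag (no intermediate radius is worse, scales inside free blocks give exponent near $\tfrac12>s_-$) is exactly what the paper's three-case estimation of $\mu(B(y,r))$ carries out, so there is no gap in the plan. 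Where you genuinely diverge is the upper bound: the paper argues softly that $\tau(x)<\infty$ forces $a_n(x)\to\infty$, hence $\lim_n\frac{\log q_n(x)}{n}=\infty$ by Stolz--Ces\`aro, and then simply cites Feng--Xu (Theorem \ref{dimthm1}); you instead re-derive that bound from scratch with the two-scale cover at threshold $T=q_{n-1}^{\beta s/(1-\beta s)}$, which is essentially the engine inside Feng--Xu's proof. Your route buys a self-contained paper at the cost of redoing the $\beta'<\beta$ approximation of the $\limsup$ set and the localization to $\{a_k\ge M \text{ for } k\ge n_0\}$ that the citation hides (both of which your sketch correctly anticipates, and your arithmetic checks out: $\rho s>1\iff \beta s^2-(\beta+2)s+1<0$, with smaller root $s_-=\frac{2}{\beta+2+\sqrt{\beta^2+4}}$, and indeed $\frac{\beta s_-}{1-\beta s_-}=\theta^*$). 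Likewise for $\alpha=\infty$ the paper imports Bugeaud--Moreira's Cantor sets, while you sketch a direct construction with digits bounded by $N$ off sparse resonances $a_{n_k}\asymp q_{n_k-1}^{\beta}$ --- also valid, since infinitely many bounded digits force $\tau=\infty$ and the exact order $\beta$ is attained at the positions $n_k-1$.
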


As a byproduct of Theorem \ref{mainthm},
we obtain an extension of the following Theorem \ref{dimthm1},
proved by Feng and Xu \cite{Fengxu}.
For any $\beta\geq0$,
let
$$
G_{\infty}(\beta)=\left\{x\in[0,1)\colon \lim_{n\to\infty}a_n(x)=+\infty \,\,\text{and} \,\,
\limsup_{n\to\infty}\frac{\log (a_n(x)a_{n+1}(x))}{\log q_n(x)}\geq\beta\right\}
$$
and
$$
E_{\infty}(\beta)=\left\{x\in[0,1)\colon \lim_{n\to\infty}a_n(x)=+\infty \,\,\text{and} \,\,
\limsup_{n\to\infty}\frac{\log (a_n(x)a_{n+1}(x))}{\log q_n(x)}=\beta\right\}.
$$
\begin{cor}
For any $\beta\geq0$,
we have
$\hdim G_{\infty}(\beta)=\hdim E_{\infty}(\beta)=\frac{2}{\beta+2+\sqrt{\beta^2+4}}$.
\end{cor}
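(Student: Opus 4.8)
The plan is to sandwich $G_\infty(\beta)$ and $E_\infty(\beta)$ between the sets $G(\alpha,\beta)$ and $E(\alpha,\beta)$ of Theorem~\ref{mainthm} for a finite value of $\alpha$, so that the corollary drops out of the case $0\le\alpha<\infty$ together with one covering estimate. The only input I need from \eqref{tdy} is the elementary observation that $\tau(x)<\infty$ forces $\lim_{n\to\infty}a_n(x)=+\infty$: choosing any $s>\tau(x)$ we have $\sum_{n\ge1}a_n^{-s}(x)<\infty$, whence $a_n^{-s}(x)\to0$ and so $a_n(x)\to\infty$. Therefore $\{x:\tau(x)=\alpha\}\subseteq\{x:\lim_n a_n(x)=+\infty\}$ for every finite $\alpha$, and in particular $G(\alpha,\beta)\subseteq G_\infty(\beta)$ and $E(\alpha,\beta)\subseteq E_\infty(\beta)$ for all $0\le\alpha<\infty$.

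Taking $\alpha=0$ in these inclusions and invoking the second case of Theorem~\ref{mainthm}, which gives $\hdim E(0,\beta)=\hdim G(0,\beta)=\frac{2}{\beta+2+\sqrt{\beta^2+4}}$, monotonicity of Hausdorff dimension yields the two lower bounds. Combining them with the trivial inclusion $E_\infty(\beta)\subseteq G_\infty(\beta)$, the whole corollary will follow from the single chain
\begin{equation*}
\frac{2}{\beta+2+\sqrt{\beta^2+4}}\le \hdim E_\infty(\beta)\le \hdim G_\infty(\beta)\le \frac{2}{\beta+2+\sqrt{\beta^2+4}},
\end{equation*}
once its rightmost inequality is proved.

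Thus everything reduces to the upper bound $\hdim G_\infty(\beta)\le\frac{2}{\beta+2+\sqrt{\beta^2+4}}$. Here I would use that $G_\infty(\beta)=\{x:\lim_n a_n(x)=+\infty\}\cap G_\beta$, which is precisely the set already bounded inside the proof of the case $0\le\alpha<\infty$ of Theorem~\ref{mainthm}: there $G(\alpha,\beta)$ is covered by enlarging it to $\{x:\lim_n a_n(x)=+\infty\}\cap G_\beta$. Concretely, fix $s>\frac{2}{\beta+2+\sqrt{\beta^2+4}}$, a small $\varepsilon>0$, and a threshold $M$; cover the set by the cylinders $I_{n+1}(a_1,\dots,a_{n+1})$ at those levels $n$ with $a_n(x)a_{n+1}(x)\ge q_n(x)^{\beta-\varepsilon}$ and with $a_i\ge M$ for all large $i$, the latter being forced by $a_n\to\infty$. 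Using $|I_{n+1}|\asymp q_{n+1}^{-2}\asymp(a_{n+1}q_n)^{-2}$ and $q_n\asymp\prod_{i\le n}a_i$, the $s$-dimensional sum over admissible words is finite; the constraint $a_i\ge M$ is exactly what lowers the critical exponent from $\frac{2}{\beta+2}=\hdim G_\beta$ to the smaller root of $\beta s^2-(\beta+2)s+1=0$, namely $\frac{2}{\beta+2+\sqrt{\beta^2+4}}$, as $M\to\infty$ and $\varepsilon\to0$.

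The main obstacle is exactly this covering estimate, and it is worth stressing why it cannot be deduced formally from Theorem~\ref{mainthm} as a black box. The set $G_\infty(\beta)$ is strictly larger than every $G(\alpha,\beta)$, since it also contains points with $\tau(x)=\infty$ but $a_n\to\infty$, and an uncountable union $\bigcup_{\alpha<\infty}G(\alpha,\beta)$ of sets of dimension $\frac{2}{\beta+2+\sqrt{\beta^2+4}}$ need not retain that dimension. What saves the argument is that the covering only ever exploits the two properties $\lim_n a_n(x)=+\infty$ and $\limsup_n\frac{\log(a_n(x)a_{n+1}(x))}{\log q_n(x)}\ge\beta$, both of which are built into $G_\infty(\beta)$; hence the bound established for $G(\alpha,\beta)$ applies verbatim to $G_\infty(\beta)$, closing the chain.
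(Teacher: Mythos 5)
Your sandwich structure is exactly the paper's (implicit) argument, and your lower bound is complete and correct: $\tau(x)<\infty$ forces $a_n(x)\to\infty$, so $E(\alpha,\beta)\subseteq E_\infty(\beta)$ for any finite $\alpha$, and the case $0\le\alpha<\infty$ of Theorem~\ref{mainthm} plus $E_\infty(\beta)\subseteq G_\infty(\beta)$ reduces everything to the upper bound. Where you diverge is there, and you misread what the paper actually does: the proof of Theorem~\ref{mainthm} does \emph{not} enlarge $G(\alpha,\beta)$ to $\{x\colon\lim_n a_n(x)=+\infty\}\cap G_\beta$ and run a covering; it enlarges it to $\big\{x\in G_\beta\colon \lim_{n\to\infty}\log q_n(x)/n=\infty\big\}$ (via $q_n\ge\prod_{k\le n}a_k$ and Stolz--Ces\`aro) and then simply cites Feng--Xu's result, quoted in the paper as Theorem~\ref{dimthm1}, for the dimension of that set. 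Since $a_n(x)\to\infty$ implies $\lim_n\log q_n(x)/n=\infty$ by the same Stolz--Ces\`aro step, the inclusion $G_\infty(\beta)\subseteq\big\{x\in G_\beta\colon\lim_n\log q_n(x)/n=\infty\big\}$ makes the upper bound an immediate citation for $\beta>0$: contrary to your worry, the paper \emph{does} supply a black box that closes the chain, namely Theorem~\ref{dimthm1}, not Theorem~\ref{mainthm}.

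Your substitute covering sketch, taken as a standalone proof, has a genuine gap: the assertion that the $s$-dimensional sum over admissible words with $a_i\ge M$ converges for every $s$ above the smaller root of $\beta s^2-(\beta+2)s+1=0$ is precisely the hard content of the Feng--Xu theorem, not a routine verification. At the target exponent $s^*=\frac{2}{\beta+2+\sqrt{\beta^2+4}}=\frac{1}{\lambda+1}$ one has $(1+\beta)s^*<1$ (since $\lambda>\beta$), so the sum over the critical digit $a_n$ is dominated by its upper cutoff rather than convergent at infinity, and for $\beta<1$ a separate whole-cylinder branch (where $a_n$ alone exceeds $q_n^{\beta}$) enters with a different exponent; naive bookkeeping then produces wrong critical values such as $\frac{1}{\beta+2}$, and the correct exponent only emerges from the branch-and-cutoff analysis carried out in \cite{HKWW2018} and \cite{Fengxu}. (Also, $q_n\asymp\prod_{i\le n}a_i$ holds only up to factors $2^n$; harmless when $M$ is large, but it should be said.) Two further small points: Theorem~\ref{dimthm1} is stated for $\beta>0$, so at $\beta=0$ the upper bound $\hdim G_\infty(0)\le\frac12$ should instead come from Good's theorem \cite{Good} for $\{x\colon\lim_n a_n(x)=+\infty\}$; and your closing observation --- that the paper's upper-bound argument for $G(\alpha,\beta)$ only uses $a_n(x)\to\infty$ and hence transfers verbatim to $G_\infty(\beta)$ --- is correct and by itself suffices. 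Lean on that transfer and on Theorem~\ref{dimthm1}, and drop the hand-rolled covering.
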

The paper is organized as follows.
In Section 2,
we first collect some elementary properties
and then present the mass distribution principle
for computing the lower bound of the Hausdorff dimensions of fractal sets.
Sections 3 is devoted to the proofs of our main results.

We use $\mathbb{N}$ to denote the set of all positive integers,
$|\cdot|$ the length of a subinterval of $[0,1)$ and $\sharp$ the cardinality of a set, respectively.
\section{Preliminaries}
In this section,
we first collect some notations and basic properties and then present some useful lemmas for
calculating the Hausdorff dimension of sets in continued fractions. \\
\indent  For any $n\geq1$ and $(a_1,\cdots,a_n)\in\mathbb{N}^{n}$, we call
\begin{equation*}
I_n(a_1, \cdots, a_n): =\left\{x\in[0,1):\ a_1(x)=a_1, \cdots, a_n(x)=a_n\right\}
\end{equation*}
a \emph{basic interval of order} $n$ of continued fractions.
It is worth pointing out that all points in $I_n(a_1, \cdots, a_n)$ have a continued fraction expnsion
beginning with $a_1,\cdots,a_n$ and thus the same $p_n(x)$ and $q_n(x)$.
If there is no confusion, we write
$p_n(a_1,\cdots,a_n)=p_n=p_n(x)$ and $\ q_n(a_1,\cdots,a_n)=q_n=q_n(x)$.
It is well known (see \cite[p.4]{Khi64}) that $p_n$ and $q_n$ satisfy the following recursive formula:
\begin{equation}\label{ppqq}
\begin{cases}
p_{-1}=1,\ \ p_0=0,\ \ p_n=a_np_{n-1}+p_{n-2}\ (n\geq1);\cr
q_{-1}=0,\ \ \ q_0=1,\ \ q_n=a_nq_{n-1}+q_{n-2}\ (n\geq1).
\end{cases}
\end{equation}
As consequences, we have the following results.
\begin{prop}[{\cite[p. 18]{IK02}}]\label{cd}
For any $(a_1,\cdots, a_n)\in\mathbb{N}^{n}$, the interval $I_n(a_1,\cdots, a_n)$ has the endpoints
$p_n/q_n$ and $(p_n+p_{n-1})/(q_n+q_{n-1})$. More precisely,
\begin{equation*}
I_n(a_1,\cdots, a_n)=
\begin{cases}
[\frac{p_n}{q_n},\frac{p_n+p_{n-1}}{q_n+q_{n-1}}),\ \ \ \text{if}\ n\ \text{is even},\cr
(\frac{p_n+p_{n-1}}{q_n+q_{n-1}},\frac{p_n}{q_n}],\ \ \ \text{if}\ n\ \text{is odd}.
\end{cases}
\end{equation*}
As a result,
\begin{equation*}
|I_n(a_1, \cdots, a_n)|=\frac{1}{q_n(q_n+q_{n-1})}.
\end{equation*}
\end{prop}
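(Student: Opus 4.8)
The plan is to realize each cylinder $I_n(a_1,\dots,a_n)$ as the image of a half-line under a single Möbius transformation built from $p_n,p_{n-1},q_n,q_{n-1}$, and then to read off both the endpoints and the length from two elementary identities attached to the recursion \eqref{ppqq}. The whole argument reduces to two short inductions, one derivative computation, and a careful accounting of which endpoints are open and which are closed.

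First I would record the two facts that do all the work, both provable by a one-line induction on \eqref{ppqq}. The first is the \emph{determinant identity}
\begin{equation*}
p_nq_{n-1}-p_{n-1}q_n=(-1)^{n-1}\qquad(n\geq 0),
\end{equation*}
which holds at $n=0$ by $p_{-1}=1,\ p_0=0,\ q_{-1}=0,\ q_0=1$ and propagates through $p_n=a_np_{n-1}+p_{n-2}$, $q_n=a_nq_{n-1}+q_{n-2}$. The second is the parametrization: for $x\in I_n(a_1,\dots,a_n)$ write $x=[a_1,\dots,a_n,t]$, where $t=a_{n+1}(x)+[0;a_{n+2}(x),\dots]$ is the $(n+1)$-th complete quotient, and a further induction on \eqref{ppqq} yields
\begin{equation*}
x=g(t):=\frac{tp_n+p_{n-1}}{tq_n+q_{n-1}}.
\end{equation*}
Since $a_{n+1}(x)\geq 1$ forces $t>1$ for irrational $x$ (and $t$ ranges over all of $(1,\infty)$ as the tail varies), the irrational points of $I_n(a_1,\dots,a_n)$ are exactly $g\big((1,\infty)\big)$.

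Next I would pin down the endpoints and the orientation. Evaluating the boundary values gives $g(1)=\frac{p_n+p_{n-1}}{q_n+q_{n-1}}$ and $\lim_{t\to\infty}g(t)=\frac{p_n}{q_n}$, the two claimed endpoints. To decide which is left and which is right, differentiate and use the determinant identity:
\begin{equation*}
g'(t)=\frac{p_n(tq_n+q_{n-1})-q_n(tp_n+p_{n-1})}{(tq_n+q_{n-1})^2}=\frac{p_nq_{n-1}-p_{n-1}q_n}{(tq_n+q_{n-1})^2}=\frac{(-1)^{n-1}}{(tq_n+q_{n-1})^2}.
\end{equation*}
Hence $g$ is strictly increasing for $n$ odd and strictly decreasing for $n$ even, so $g\big((1,\infty)\big)$ is the open interval with endpoints $\frac{p_n}{q_n}$ and $\frac{p_n+p_{n-1}}{q_n+q_{n-1}}$, ordered according to the parity of $n$. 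The length is then immediate from the same identity:
\begin{equation*}
|I_n(a_1,\dots,a_n)|=\left|\frac{p_n}{q_n}-\frac{p_n+p_{n-1}}{q_n+q_{n-1}}\right|=\frac{|p_nq_{n-1}-p_{n-1}q_n|}{q_n(q_n+q_{n-1})}=\frac{1}{q_n(q_n+q_{n-1})}.
\end{equation*}

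The one genuinely delicate point — and the step I would treat most carefully — is the half-open bookkeeping, i.e.\ that $\frac{p_n}{q_n}$ is included while $\frac{p_n+p_{n-1}}{q_n+q_{n-1}}$ is excluded, independently of parity. This is not arbitrary: the rational $\frac{p_n}{q_n}=[a_1,\dots,a_n]$ has first $n$ partial quotients equal to $a_1,\dots,a_n$, so it lies in $I_n(a_1,\dots,a_n)$ (the closed side, the $t\to\infty$ limit), whereas $\frac{p_n+p_{n-1}}{q_n+q_{n-1}}=[a_1,\dots,a_n,1]=[a_1,\dots,a_{n-1},a_n+1]$ has $n$-th partial quotient $a_n+1\neq a_n$ and therefore belongs to the neighbouring cylinder $I_n(a_1,\dots,a_{n-1},a_n+1)$ rather than to $I_n(a_1,\dots,a_n)$ (the open side, $t=1$). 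Combining the monotonicity of $g$ with this inclusion/exclusion gives precisely $[\tfrac{p_n}{q_n},\tfrac{p_n+p_{n-1}}{q_n+q_{n-1}})$ for $n$ even and $(\tfrac{p_n+p_{n-1}}{q_n+q_{n-1}},\tfrac{p_n}{q_n}]$ for $n$ odd, completing the proof.
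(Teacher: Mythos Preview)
Your proof is correct, and there is nothing to compare it against: in the paper this proposition is stated as a citation from \cite[p.~18]{IK02} and carries no proof of its own. The argument you give --- parametrize the cylinder by $g(t)=(tp_n+p_{n-1})/(tq_n+q_{n-1})$ for $t\in(1,\infty)$, read off the endpoints at $t=1$ and $t\to\infty$, use the determinant identity $p_nq_{n-1}-p_{n-1}q_n=(-1)^{n-1}$ to determine both the orientation (via $g'$) and the length, and then settle the half-open structure by identifying which rational endpoint has $n$-th partial quotient $a_n$ --- is the standard textbook route and is carried out cleanly.

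One tiny phrasing issue: the sentence ``the irrational points of $I_n(a_1,\dots,a_n)$ are exactly $g\big((1,\infty)\big)$'' is a slight overstatement, since $g\big((1,\infty)\big)$ is the whole open interval and contains rationals too. What you mean (and what your subsequent endpoint discussion makes clear) is that $g$ bijects the irrational $t\in(1,\infty)$ onto the irrationals of $I_n$, and then density plus the explicit endpoint analysis recovers the full half-open interval. This does not affect the correctness of the argument.
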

\begin{lem}[{\cite[p. 13]{Khi64}}]\label{qnxx}
For any $(a_1,\cdots, a_n)\in\mathbb{N}^{n}$, we have
\[
q_n\ge2^{\frac{n-1}{2}}\ \ \text{and}\ \ \prod_{k=1}^{n}a_k\leq q_n\leq\prod_{k=1}^{n}(a_k+1).
\]
\end{lem}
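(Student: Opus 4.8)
The plan is to read off all three inequalities directly from the recursion $q_n=a_nq_{n-1}+q_{n-2}$ recorded in \eqref{ppqq}, together with the initial values $q_0=1$ and $q_1=a_1$, by induction on $n$. The single fact underlying every step is that the denominators form a positive non-decreasing sequence. Indeed $q_0=1$ and $q_1=a_1\ge1$ are positive, and once $q_{n-2},q_{n-1}>0$ the relation $q_n=a_nq_{n-1}+q_{n-2}$ with $a_n\ge1$ forces both $q_n>0$ and $q_n\ge q_{n-1}$; hence $0<q_{n-2}\le q_{n-1}\le q_n$ for all $n\ge1$ by induction. I will use this monotonicity repeatedly.

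For the product bounds I would prove $\prod_{k=1}^{n}a_k\le q_n$ and $q_n\le\prod_{k=1}^{n}(a_k+1)$ by two separate inductions, each with base case $n=1$ supplied by $q_1=a_1$. For the lower bound the inductive step simply discards the non-negative term $q_{n-2}$:
$$
q_n=a_nq_{n-1}+q_{n-2}\ge a_nq_{n-1}\ge a_n\prod_{k=1}^{n-1}a_k=\prod_{k=1}^{n}a_k.
$$
For the upper bound the inductive step instead uses monotonicity to replace the trailing term via $q_{n-2}\le q_{n-1}$:
$$
q_n=a_nq_{n-1}+q_{n-2}\le(a_n+1)q_{n-1}\le(a_n+1)\prod_{k=1}^{n-1}(a_k+1)=\prod_{k=1}^{n}(a_k+1).
$$

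For the exponential estimate $q_n\ge2^{(n-1)/2}$ I would keep both terms of the recursion and apply $a_n\ge1$ to obtain $q_n\ge q_{n-1}+q_{n-2}$; combining this with $q_{n-1}\ge q_{n-2}$ yields the two-step inequality $q_n\ge2q_{n-2}$. Iterating in steps of two from the seeds $q_0=1$ and $q_1=a_1\ge1$ gives $q_{2m}\ge2^m$ and $q_{2m+1}\ge2^m$, and a short parity check shows that in either case the bound $2^m\ge2^{(n-1)/2}$ holds. No step here presents a genuine obstacle, since the whole lemma is a routine consequence of \eqref{ppqq}; the only points deserving minor care are the bookkeeping of the base cases $n=0,1$ needed to launch the two-step recursion, and the verification that the claimed exponent $2^{(n-1)/2}$ is matched separately for even and odd $n$.
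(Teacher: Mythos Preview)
Your proof is correct; each of the three inequalities follows from the recursion \eqref{ppqq} exactly as you outline, and the base cases and parity check are handled properly. The paper does not give its own proof of this lemma---it simply cites Khinchin's book---so there is no in-paper argument to compare against.
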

\begin{lem}[{\cite[Lemma 2.1]{Wu06}}]\label{sqn}
For any $n\ge1$ and $1\le k\le n$, we have
$$
\dfrac{a_k+1}{2}\le \frac{q_n(a_1,\cdots,a_{k-1},a_{k},a_{k+1},\cdots,a_n)}{q_{n-1}(a_1,\cdots,a_{k-1},a_{k+1},\cdots,a_n)}\le a_k+1.
$$
\end{lem}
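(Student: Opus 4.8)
The plan is to regard $q_n$ as a \emph{continuant} and to exploit the fact that, as a function of the single entry $a_k$, it is affine. First I would record the matrix identity
$$
\begin{pmatrix} p_n & p_{n-1}\\ q_n & q_{n-1}\end{pmatrix}
=\prod_{i=1}^{n}\begin{pmatrix} a_i & 1\\ 1 & 0\end{pmatrix},
$$
which follows from \eqref{ppqq} by induction on $n$. Multiplying out the product after the $k$-th factor and reading off the bottom-right entry gives the continuant addition formula; writing $q_m(\cdots)$ for the denominator attached to a finite string and adopting the conventions $q_0=1$, $q_{-1}=0$ for the empty and degenerate strings, this reads
$$
q_n(a_1,\dots,a_n)=q_k(a_1,\dots,a_k)\,q_{n-k}(a_{k+1},\dots,a_n)
+q_{k-1}(a_1,\dots,a_{k-1})\,q_{n-k-1}(a_{k+2},\dots,a_n).
$$

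Next I would isolate the dependence on $a_k$. Put $A=q_{k-1}(a_1,\dots,a_{k-1})$, $A'=q_{k-2}(a_1,\dots,a_{k-2})$, $B=q_{n-k}(a_{k+1},\dots,a_n)$ and $B'=q_{n-k-1}(a_{k+2},\dots,a_n)$. Feeding $q_k(a_1,\dots,a_k)=a_kA+A'$ (again \eqref{ppqq}) into the addition formula yields
$$
q_n=q_n(a_1,\dots,a_{k-1},a_k,a_{k+1},\dots,a_n)=a_kAB+A'B+AB',
$$
while the same splitting applied at index $k-1$ to the string with $a_k$ \emph{deleted} gives
$$
q_{n-1}=q_{n-1}(a_1,\dots,a_{k-1},a_{k+1},\dots,a_n)=AB+A'B'.
$$
Thus the target quotient is $\dfrac{a_kAB+A'B+AB'}{AB+A'B'}$, a ratio whose numerator is affine in $a_k$ and whose denominator does not involve $a_k$ at all.

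I would then deduce both bounds from two nonnegativity identities obtained by direct expansion,
$$
(a_k+1)q_{n-1}-q_n=a_kA'B'+(A-A')(B-B'),
$$
$$
2q_n-(a_k+1)q_{n-1}=(a_k-1)(AB-A'B')+2\bigl(A'(B-B')+AB'\bigr).
$$
Every term on the right-hand sides is nonnegative: $a_k\ge1$, and the continuant monotonicity $A\ge A'\ge0$, $B\ge B'\ge0$ follows from \eqref{ppqq} (for $B\ge B'$ one first reverses the tail string, using the symmetry of continuants under reversal, or equivalently uses the front recursion obtained by peeling off the leftmost matrix). Hence $(a_k+1)q_{n-1}-q_n\ge0$ and $2q_n-(a_k+1)q_{n-1}\ge0$, which together give $\tfrac{a_k+1}{2}\le q_n/q_{n-1}\le a_k+1$, as desired.

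Finally, the one genuinely delicate point is the bookkeeping at the boundaries $k\in\{1,2,n-1,n\}$, where one or more of $A',B,B'$ collapse to a continuant of the empty or the length $(-1)$ string. I expect the main obstacle to be checking that the conventions $q_0=1$, $q_{-1}=0$ make the decomposition $q_n=a_kAB+A'B+AB'$ and $q_{n-1}=AB+A'B'$ valid uniformly in these edge cases—for instance $k=n$ must reproduce the plain recursion $q_n=a_nq_{n-1}+q_{n-2}$—after which the two identities, and hence the inequalities, hold with no further case analysis.
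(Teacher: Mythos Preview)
The paper does not supply a proof of this lemma; it is quoted verbatim from \cite{Wu06} as a preliminary result. So there is no ``paper's own proof'' to compare against, and I evaluate your argument on its own merits.

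Your approach is correct. The continuant splitting
\[
q_n=a_kAB+A'B+AB',\qquad q_{n-1}(\text{with }a_k\text{ deleted})=AB+A'B'
\]
is exactly right, and the two algebraic identities you display,
\[
(a_k+1)q_{n-1}-q_n=a_kA'B'+(A-A')(B-B'),\qquad
2q_n-(a_k+1)q_{n-1}=(a_k-1)(AB-A'B')+2\bigl(A'(B-B')+AB'\bigr),
\]
are easily checked and yield the two inequalities immediately once one knows $A\ge A'\ge0$ and $B\ge B'\ge0$. The monotonicity $A\ge A'$ is the ordinary recursion \eqref{ppqq}; for $B\ge B'$ you correctly invoke the reversal symmetry of continuants, and the boundary cases $k\in\{1,n\}$ collapse to the front and back recursions as you anticipate.

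One cosmetic point: with the paper's convention $p_{-1}=1$, $p_0=0$, $q_{-1}=0$, $q_0=1$ (so that $[a_1,a_2,\dots]=1/(a_1+\cdots)$ and $q_1=a_1$, $p_1=1$), the matrix identity reads
\[
\begin{pmatrix} q_n & q_{n-1}\\ p_n & p_{n-1}\end{pmatrix}
=\prod_{i=1}^{n}\begin{pmatrix} a_i & 1\\ 1 & 0\end{pmatrix},
\]
i.e., your rows are swapped. This does not affect anything downstream, since you only use the $(1,1)$-entry (the continuant $q_n$) and the splitting formula for it, both of which remain valid. The proof stands.
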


For any $m\in\mathbb{N}$, let
$B_m=\{x\in[0,1)\colon 1\le a_n(x)\le m\;\;\text{for any }\;n\ge1\}$.
Jarn\'ik calculated the Hausdorff dimension of $B_m$.
\begin{lem}
[\cite{Jarnik1928}]
\label{bounded}
For any $m\ge 8$,
$$
1-\dfrac{1}{m\log2}\le\hdim B_m\le 1-\dfrac{1}{8m\log m}.
$$
In particular, the set
$$
B=\{x\in[0,1)\colon \sup_{n\ge1}a_n(x)<+\infty\}
$$
is of Hausdorff dimension 1.
\end{lem}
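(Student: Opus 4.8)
The plan is to treat $B_m$ as the limit set of the finite system of inverse branches $f_a\colon x\mapsto 1/(a+x)$, $1\le a\le m$, of the Gauss map, and to read both inequalities off the level sums
$$
\Phi_n(s):=\sum_{(a_1,\dots,a_n)\in\{1,\dots,m\}^n}|I_n(a_1,\dots,a_n)|^s .
$$
By Proposition \ref{cd} one has $|I_n|\asymp q_n^{-2}$, and as a standard consequence of \eqref{ppqq} the denominators are almost multiplicative, $q_{k}(a_1,\dots,a_k)\,q_{n-k}(a_{k+1},\dots,a_n)\le q_n\le 2\,q_k\,q_{n-k}$; hence $\Phi_n$ is sub- and super-multiplicative up to an absolute constant, so $P(s):=\lim_n \frac1n\log\Phi_n(s)$ exists, is continuous and strictly decreasing, and $\hdim B_m$ is its unique zero. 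The final assertion is then immediate: since $B=\bigcup_{m\ge1}B_m$, countable stability of Hausdorff dimension gives $\hdim B=\sup_m\hdim B_m\ge\sup_m\big(1-\tfrac1{m\log2}\big)=1$, while $\hdim B\le1$ is trivial.

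For the lower bound I would apply the mass distribution principle of Section 2 to the Bernoulli measure $\mu$ assigning to $I_n(a_1,\dots,a_n)$ the mass $Z^{-n}\prod_{k=1}^n\big(a_k(a_k+1)\big)^{-s}$, where $Z=\sum_{a=1}^m\big(a(a+1)\big)^{-s}=\Phi_1(s)$ and $|I_1(a)|=\frac1{a(a+1)}$. The almost-multiplicativity above yields bounded distortion with an \emph{absolute} constant $K$, giving $\mu(I_n)\le K^s\,Z^{-n}|I_n|^s$; hence whenever $\Phi_1(s)\ge1$ the Frostman bound $\mu(I_n)\le K^s|I_n|^s$ holds and the principle gives $\hdim B_m\ge s$. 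It therefore suffices to check $\Phi_1\big(1-\tfrac1{m\log2}\big)\ge1$: writing $s=1-\varepsilon$ and using $\big(a(a+1)\big)^{\varepsilon}\ge1+\varepsilon\log\big(a(a+1)\big)$ together with $\sum_{a=1}^m\big(a(a+1)\big)^{-1}=1-\tfrac1{m+1}$, this reduces to an elementary lower bound for $\sum_{a=1}^m \frac{\log(a(a+1))}{a(a+1)}$, from which the constant $\tfrac1{\log2}$ emerges.

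The upper bound is the delicate half, and here a one-step comparison is not enough: the off-diagonal contributions $(a_nq_{n-1}+q_{n-2})^{-2s}$ accumulate, so replacing $q_{n-2}/q_{n-1}$ by an extreme value overestimates $\Phi_n(s)^{1/n}$ and cannot produce a multiplier below $1$ for $s$ near $1$. Instead I would estimate the genuine decay rate $P(s)$ for large $n$, that is, the leading eigenvalue $\lambda_m(s)$ of the transfer operator $\mathcal{L}_s g(x)=\sum_{a=1}^m (a+x)^{-2s}\,g\big(1/(a+x)\big)$, and show $P\big(1-\tfrac1{8m\log m}\big)<0$. This rests on two inputs: first, a defect estimate $P(1)\le -c/m$, obtained by observing that $\Phi_n(1)$ is exactly the Lebesgue measure of $\{x:a_i(x)\le m,\ 1\le i\le n\}$, which decays geometrically with rate governed by the Gauss--Kuzmin survival probability $1-\log_2\!\big(1+\tfrac1{m+1}\big)$ per step; second, a crude derivative bound $P'(s)\ge -C\log m$ coming from $\log q_n\le n\log(m+1)$ via Lemma \ref{qnxx}. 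Balancing $P(1)\le -c/m$ against the increment $|P'|\le C\log m$ over an interval of length $\tfrac1{8m\log m}$ keeps $P$ negative, whence $\mH^s(B_m)=0$ and $\hdim B_m\le 1-\tfrac1{8m\log m}$.

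The main obstacle is precisely this upper bound: one must control the true exponential rate $P(s)$ rather than the single-level sum, because the distortion and off-diagonal effects that are harmless for the mass-distribution lower bound (where only an absolute constant is needed) become decisive near $s=1$, where $\Phi_1(1)=1-\tfrac1{m+1}$ sits a mere $O(1/m)$ below the critical value. Converting the $\asymp 1/m$ spectral defect and the $\asymp\log m$ derivative bound into the stated constant $\tfrac1{8\log m}$ is where the quantitative bookkeeping concentrates, and it is exactly the asymmetry between these two crude estimates — defect of order $1/m$, derivative overestimated by $\log m$ — that accounts for the $\tfrac1{m\log m}$ scale of the gap from $1$.
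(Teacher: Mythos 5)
The paper does not prove this lemma at all --- it is quoted directly from Jarn\'ik \cite{Jarnik1928} --- so the only question is whether your argument is correct, and the lower-bound half is not. The claim that almost-multiplicativity of $q_n$ ``yields bounded distortion with an \emph{absolute} constant $K$, giving $\mu(I_n)\le K^s Z^{-n}|I_n|^s$'' is false: it would require $q_n(q_n+q_{n-1})\le K\prod_{k=1}^n a_k(a_k+1)$, but the ratio $q_n^2/\prod_k a_k(a_k+1)$ is exponentially large on small-digit words. Concretely, for $a_1=\dots=a_n=1$ one has $q_n=F_{n+1}\sim \phi^{n+1}/\sqrt5$ with $\phi=(1+\sqrt5)/2$, so $q_n^2/\prod_k a_k(a_k+1)\asymp(\phi^2/2)^n$ with $\phi^2/2\approx 1.309$. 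Since your normalization gives $Z=\Phi_1\big(1-\tfrac1{m\log 2}\big)\approx 1+O(1/m)$, the Frostman ratio $\mu(I_n)/|I_n|^s\gtrsim\big((\phi^2/2)^s/Z\big)^n$ blows up along exactly these cylinders (already for $m=8$, and worse as $m$ grows), so the mass distribution principle does not apply. Note the internal inconsistency: in your upper-bound discussion you correctly observe that one-step sums cannot capture the exponential rate because constants compound, yet the criterion ``$\Phi_1(s)\ge 1$ suffices'' is precisely such a one-step shortcut. Nor is this repairable by tracking the constant: the honest version of your estimate is $\mu(I_n)\le 2^s(2^s/Z)^n|I_n|^s$, so the method needs $\Phi_1(s)\ge 2^s$, which caps it at roughly $s\approx 0.78$ even for $m=\infty$ --- far short of $1-\tfrac1{m\log 2}$. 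The same compounding means the digit-Bernoulli measure is simply the wrong measure: near $s=1$ it overweights small-digit cylinders, which are the ones carrying the dimension of $B_m$.

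The repair is classical and mirrors what you already do for the upper bound: work at the level of the pressure, not of $\Phi_1$. Take the (quasi-)Gibbs measure with $\mu(I_n)\asymp e^{-nP(s)}q_n^{-2s}$ (consistency up to constants follows from $q_kq_{n-k}'\le q_n\le 2q_kq_{n-k}'$), at which point the Frostman bound holds whenever $P(s)\ge 0$. The root is then located from below by two inputs dual to your upper-bound ones: a \emph{lower} bound on the survivor rate, $P(1)\ge\log\big(1-C/m\big)\ge -C'/m$, and the \emph{universal} slope bound $|P'(s)|\ge\log 2$ coming from $q_n\ge 2^{(n-1)/2}$ (Lemma \ref{qnxx}), i.e.\ $|I_n|\le 2^{-(n-1)}$; this gives $1-s_m\le \tfrac{C'}{m\log 2}$ and explains where Jarn\'ik's constant $\tfrac1{\log 2}$ actually comes from --- your elementary computation with $\sum_a\frac{\log(a(a+1))}{a(a+1)}$ produces it only through the invalid criterion. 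Your upper-bound half, by contrast, is sound in outline: $P(1)\le -c/m$ via the uniform per-step removal proportion $\mathrm{Leb}(a_{n+1}>m\mid I_n)\asymp 1/m$, the slope bound $|P'|\le 2\log(m+1)+O(1)$ from $|I_n|\ge\tfrac12(m+1)^{-2n}$, and then $P(s)<0$ forces $\mH^s(B_m)=0$ by covering with level-$n$ cylinders; the constant bookkeeping you flag is real but routine. The final deduction ($B=\bigcup_m B_m$ and countable stability) is fine once a correct lower bound for $\hdim B_m$ tending to $1$ is in hand.
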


The following lemma, established by Feng and Xu \cite{Fengxu},
is concerned with the Hausdorff dimension of the intersection of the set of Dirichlet non-improvable numbers
and the set of points whose L\'{e}vy constant equal to infinity in continued fractions.
\begin{thm}[\cite{Fengxu}]
\label{dimthm1}
For any $\beta>0$, we have
\[
\hdim \Big\{x\in G_{\beta}\colon \lim_{n\to\infty}\frac{\log q_n(x)}{n}=\infty\Big\}
=\frac{2}{\beta+2+\sqrt{\beta^2+4}},
\]
where the set $ G_{\beta}$ is defined as in \eqref{G(beta)}.
\end{thm}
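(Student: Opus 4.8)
The plan is to organize everything around the homogeneous growth profile $a_n(x)\asymp q_{n-1}^{\gamma}$, where $\gamma>0$ is the unique positive root of $\gamma(\gamma+2)=\beta(1+\gamma)$, i.e. $\gamma=\frac{\beta-2+\sqrt{\beta^2+4}}{2}$. A one-line computation gives $\gamma+2=\frac{\beta+2+\sqrt{\beta^2+4}}{2}$, so $\frac{1}{\gamma+2}=\frac{2}{\beta+2+\sqrt{\beta^2+4}}$ and it suffices to show the set of Theorem \ref{dimthm1} has dimension $\frac{1}{\gamma+2}$. The guiding heuristic: if $a_n\asymp q_{n-1}^{\gamma}$ for all $n$, then $q_n\asymp q_{n-1}^{1+\gamma}$ forces $\log q_n/n\to\infty$, while $a_na_{n+1}\asymp q_{n-1}^{\gamma}q_n^{\gamma}$ together with $\log q_{n-1}\sim\frac{\log q_n}{1+\gamma}$ yields $\frac{\log(a_na_{n+1})}{\log q_n}\to\frac{\gamma(2+\gamma)}{1+\gamma}=\beta$. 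Thus such points lie in the target set, and the whole task is to prove this profile is dimension-extremal.

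\textbf{Lower bound.} I would build a Cantor set $K$ by allowing at level $n$ every $a_n$ in the window $[q_{n-1}^{\gamma},2q_{n-1}^{\gamma}]$, so there are $\asymp q_{n-1}^{\gamma}$ admissible children, each of length $\asymp q_n^{-2}$ by Proposition \ref{cd}. Every $x\in K$ meets the two defining conditions by the heuristic above, so $K$ is contained in the set of Theorem \ref{dimthm1}. I then place the measure $\mu$ splitting mass equally among admissible children and estimate $\mu(B(x,r))$ via the mass distribution principle of Section 2. The crucial and, I expect, hardest point is that the resolving scale is \emph{not} $|I_n|\asymp q_n^{-2}$ but the cluster scale $r_n^{\ast}\asymp (q_{n-1}^{\gamma}q_{n-1}^{2})^{-1}=(q_nq_{n-1})^{-1}$, because the $\asymp q_{n-1}^{\gamma}$ children with $a_n\asymp q_{n-1}^{\gamma}$ accumulate, by Lemma \ref{sqn}, inside a sub-interval of $I_{n-1}$ of length $\asymp r_n^{\ast}$. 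Splitting $r$ into the regimes $[\,|I_n|,\,r_n^{\ast}\,]$ and $[\,r_n^{\ast},\,|I_{n-1}|\,]$ and using Lemmas \ref{qnxx}--\ref{sqn}, both regimes give the binding bound $\mu(B(x,r))\le C\,r^{s}$ with $s=\frac{\log\mu(I_{n-1})}{\log r_n^{\ast}}$; since $\log\mu(I_{n-1})\sim-\log q_{n-1}$ (a geometric sum) and $-\log r_n^{\ast}\sim(\gamma+2)\log q_{n-1}$, this ratio tends to $\frac{1}{\gamma+2}$, so $\hdim K\ge\frac{1}{\gamma+2}$.

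\textbf{Upper bound.} Using the two defining properties I would place the set inside $\bigcap_{N}\bigcup_{n\ge N}\mathcal C_n$, where $\mathcal C_n$ collects configurations with $a_na_{n+1}\ge q_n^{\beta-\epsilon}$ on which the Lévy condition has already forced $q_{n-1}\ge e^{C(n-1)}$ for an arbitrarily large prescribed $C$. The efficient cover hinges on splitting each configuration at the threshold $a_n\gtrless q_{n-1}^{\gamma}$: when $a_n\ge q_{n-1}^{\gamma}$ I discard the $a_{n+1}$ information and cover by the single cluster $\{x\in I_{n-1}\colon a_n(x)\ge q_{n-1}^{\gamma}\}$ of length $\asymp q_{n-1}^{-(\gamma+2)}$, while when $a_n<q_{n-1}^{\gamma}$ I cover the spike interval $\{x\in I_n\colon a_{n+1}\ge q_n^{\beta-\epsilon}/a_n\}$ of length $\asymp a_n q_n^{-(\beta-\epsilon+2)}$ individually and sum over $a_n<q_{n-1}^{\gamma}$. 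A direct computation --- this is exactly where the defining relation $\gamma(\gamma+2)=\beta(1+\gamma)$ enters --- shows that, for a fixed order-$(n-1)$ prefix, the two families contribute weights $\asymp q_{n-1}^{-w}$ and $\asymp q_{n-1}^{-w'}$ with $w=s(\gamma+2)$ and $w'=s\bigl(\beta+2+\gamma(\beta+1)\bigr)-\gamma$ (the latter using $\sum_{a_n<A}a_n^{-s(\beta+1)}\asymp A^{1-s(\beta+1)}$, valid since $s(\beta+1)<1$ near $s=\frac{1}{\gamma+2}$), and that $w=w'=1$ precisely at $s=\frac{1}{\gamma+2}$.

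\textbf{Summation over $n$ and main obstacles.} It remains to sum the prefix weights over $n$, and this is where the Lévy restriction does its work. For any $w>1$ I write $q_{n-1}^{-w}=q_{n-1}^{-(w-2\sigma)}q_{n-1}^{-2\sigma}$ with $\sigma$ slightly above $\tfrac12$, bound $\sum_{(a_1,\dots,a_{n-1})}q_{n-1}^{-2\sigma}\le\bigl(\sum_{a\ge1}a^{-2\sigma}\bigr)^{\,n-1}=:\lambda(\sigma)^{n-1}$ by Lemma \ref{qnxx}, and use $q_{n-1}\ge e^{C(n-1)}$ to gain the factor $e^{-C(n-1)(w-2\sigma)}$; since the Lévy constant is infinite, $C$ may be taken large enough that $e^{-C(w-2\sigma)}\lambda(\sigma)<1$, so the series in $n$ converges and the total $s$-sum tends to $0$ as $N\to\infty$. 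Hence $\mH^{s}$ of the set vanishes for every $s>\frac{1}{\gamma+2}$, giving $\hdim\le\frac{1}{\gamma+2}$ and completing the proof. The two genuinely delicate points I expect to fight with are (i) justifying the cluster-scale ball estimate in the lower bound uniformly across the two $r$-regimes, and (ii) handling the small-$a_n$ family in the upper bound in the range $s(\beta+1)<1$, where the individual spike intervals must be summed rather than grouped and the threshold $q_{n-1}^{\gamma}$ must be matched exactly to the cluster family.
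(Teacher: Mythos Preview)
The paper does not prove Theorem~\ref{dimthm1}; it quotes it from Feng--Xu \cite{Fengxu} and uses it as a black box in Section~3.1 for the upper bound of the main theorem. Your proposal is therefore a self-contained proof of a result the paper imports, and the argument is sound. Your parameter $\gamma$ is $\lambda-1$ in the paper's notation (the relation $\gamma(\gamma+2)=\beta(1+\gamma)$ is equivalent to $\lambda-\tfrac{1}{\lambda}=\beta$), and $\tfrac{1}{\gamma+2}=\tfrac{1}{\lambda+1}$ matches the claimed dimension.

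Your homogeneous lower-bound set $a_n\in[q_{n-1}^{\gamma},2q_{n-1}^{\gamma}]$ is simpler than what the paper does in Section~3.2 for the nearby set $E(\alpha,\beta)$: there the authors must also realise a prescribed convergence exponent $\alpha$, so they take a slow base $a_n\asymp n^{1/\alpha}$ and insert, at sparse times $m_j$, pairs $(c_j,e_j)\asymp(n_j^{\lambda-1},n_j^{\lambda(\lambda-1)})$, which forces a three-case mass-distribution analysis ($n=m_j$, $n=m_j+1$, and the bulk). Your single cluster-scale regime is the clean version of the same idea, and the ratio $\tfrac{-\log\mu(I_{n-1})}{-\log r_n^\ast}\to\tfrac{1}{\gamma+2}$ is correct since $-\log\mu(I_{n-1})\sim\log q_{n-1}$ (a geometric sum with ratio $(1+\gamma)^{-1}$) and $-\log r_n^\ast\sim(\gamma+2)\log q_{n-1}$. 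For the upper bound, the threshold split $a_n\gtrless q_{n-1}^{\gamma}$ together with the L\'evy-constant trick to make the $n$-series converge is exactly the mechanism the paper outsources to \cite{Fengxu}; the identities $w=w'=1$ at $s=\tfrac{1}{\gamma+2}$ check out via $\gamma(\gamma+2)=\beta(1+\gamma)$, and the range condition $s(\beta+1)<1$ near that $s$ is equivalent to $\beta<\lambda$, which always holds. One point to make explicit: to have $q_{n-1}\ge e^{C(n-1)}$ genuinely available at every $n$ appearing in the cover, write the target set as a countable union over the index $M$ from which the L\'evy growth has set in, and cover each piece separately; the union is harmless for Hausdorff dimension.
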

To end this section, we present the mass distribution principle,
 which is usually applied to obtain the lower bound of the Hausdorff dimension of a set.
\begin{lem}[{\cite[Proposition 2.3]{Fal97}}]\label{mass principle}
Let $E\subseteq[0,1)$ be a Borel set and let $\mu$ be a finite measure with $\mu(E)>0$.
If
  \begin{equation*}
    \liminf\limits_{r\to 0}\frac{\log\mu(B(x,r))}{\log r}\geq s\ \ \text{for all}\ x\in E,
  \end{equation*}
where $B(x,r)$ denotes the open ball with center $x$ and radius $r$,
then we have $\hdim E\geq s$.
\end{lem}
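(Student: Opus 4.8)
The plan is to prove that $\hdim E\geq t$ for every $t<s$, since letting $t\uparrow s$ afterwards yields $\hdim E\geq s$. Fix such a $t$. Unwinding the definition of $\liminf$, the hypothesis says that for each $x\in E$ there is a threshold $\rho(x)>0$ with $\mu(B(x,r))\leq r^{t}$ whenever $0<r<\rho(x)$; this is equivalent to $\tfrac{\log\mu(B(x,r))}{\log r}\geq t$ for small $r$, because $\log r<0$ reverses the inequality. The one genuine difficulty is that the threshold $\rho(x)$ depends on the point $x$, whereas a Hausdorff-measure lower bound requires a scale that is uniform over a set still carrying positive $\mu$-mass.

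First I would uniformize. Setting $E_k=\{x\in E\colon \mu(B(x,r))\leq r^{t}\ \text{for all}\ 0<r<1/k\}$, one checks $E=\bigcup_{k\geq1}E_k$ with $E_1\subseteq E_2\subseteq\cdots$, since $x\in E$ lies in $E_k$ as soon as $1/k<\rho(x)$. Continuity from below of the finite measure $\mu$ then gives $\mu(E_k)\to\mu(E)>0$, so there is a fixed $k_0$ with $\mu(F)>0$ for $F:=E_{k_0}$; on $F$ the estimate $\mu(B(x,r))\leq r^{t}$ now holds for every $0<r<\delta_0:=1/k_0$, uniformly in $x\in F$.

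Next I would compare an arbitrary covering set with a ball. Let $\{U_i\}_{i\geq1}$ be any countable cover of $E$ with $|U_i|<\delta_0/2$. Only the indices with $U_i\cap F\neq\emptyset$ contribute to the mass of $F$; for each such $i$ pick $x_i\in U_i\cap F$, and observe that every $y\in U_i$ satisfies $|y-x_i|\leq|U_i|$, so $U_i\subseteq B(x_i,2|U_i|)$. As $2|U_i|<\delta_0$, the uniform estimate on $F$ gives $\mu(U_i)\leq\mu\big(B(x_i,2|U_i|)\big)\leq(2|U_i|)^{t}=2^{t}|U_i|^{t}$. Summing over $i$ and using $F\subseteq\bigcup_iU_i$,
\[
0<\mu(F)\leq\sum_{i\colon U_i\cap F\neq\emptyset}\mu(U_i)\leq 2^{t}\sum_i|U_i|^{t},
\]
whence $\sum_i|U_i|^{t}\geq 2^{-t}\mu(F)$ for every cover of $E$ of mesh below $\delta_0/2$.

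Taking the infimum over all such covers yields $\mH^{t}_{\delta_0/2}(E)\geq 2^{-t}\mu(F)>0$ for the $\delta_0/2$-approximating Hausdorff content, and since this content is non-decreasing as the mesh shrinks to $0$, we obtain $\mH^{t}(E)\geq 2^{-t}\mu(F)>0$. A set of positive $t$-dimensional Hausdorff measure has dimension at least $t$, so $\hdim E\geq t$, and letting $t\uparrow s$ finishes the proof. As noted, the only delicate point is the uniformization step replacing the per-point thresholds $\rho(x)$ by a single scale $\delta_0$ on a positive-measure piece $F\subseteq E$; thereafter the ball-versus-cover comparison is routine, the factor $2^{t}$ being harmless since only the strict positivity of $\mH^{t}(E)$ is needed.
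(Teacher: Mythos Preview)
The paper does not prove this lemma; it is simply quoted from Falconer \cite[Proposition~2.3]{Fal97} and used as a black box, so there is no in-paper argument to compare against.

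Your proof is correct and is essentially the standard derivation of the local-dimension form of the mass distribution principle from the classical version. The uniformization step via the increasing exhaustion $E_k=\{x\in E:\mu(B(x,r))\le r^t\ \text{for all}\ 0<r<1/k\}$ is the right idea. One small point you might make explicit: to invoke continuity from below you need the $E_k$ to be $\mu$-measurable. This is easy here, since $x\mapsto\mu(B(x,r))$ is lower semicontinuous (the balls are open) and the condition ``$\mu(B(x,r))\le r^t$ for all $0<r<1/k$'' can be checked on a countable dense set of radii by monotonicity in $r$, so each $E_k$ is Borel. After that, the ball-versus-cover comparison and the conclusion $\mathcal{H}^t(E)\ge 2^{-t}\mu(F)>0$ are exactly as you wrote.
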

\section{Proof of the main results}
This section is devoted to the proofs of the main results.
Recall that
$$
G(\alpha,\beta)
=\left\{x\in[0,1)\colon \tau(x)=\alpha \,\,\text{and} \,\,
\limsup_{n\to\infty}\frac{\log (a_n(x)a_{n+1}(x))}{\log q_n(x)}\geq\beta\right\},
$$
and
$$
E(\alpha,\beta)
=\left\{x\in[0,1)\colon \tau(x)=\alpha \,\,\text{and} \,\,
\limsup_{n\to\infty}\frac{\log (a_n(x)a_{n+1}(x))}{\log q_n(x)}=\beta\right\}.
$$
To prove Theorem \ref{mainthm},
we shall give the upper bound of Hausdorff dimension of $ G(\alpha,\beta)$
and the lower bound of Hausdorff dimension of $ E(\alpha,\beta)$ respectively.

If $\beta=0$ and $0\leq \alpha<\infty$,
on one hand,
it is obvious that
$$
G(\alpha,0)\subset \big\{x\in[0,1):\ \tau(x)=\alpha\big\},
$$
which shows that $\hdim G(\alpha,0)\leq \frac{1}{2}$.
On the other hand,
the Cantor type sets constructed in the proof of Theorem 2.1 in \cite{FMSW21} are also
subsets of $E(\alpha,\beta)$.
Then, we have $\hdim G(\alpha,0)= \frac{1}{2}$.

If $\beta=0$ and $\alpha=\infty$,
it is obvious that the set $B$ defined in Lemma \ref{bounded} is a subset of $E(\infty,0)$.
By Lemma \ref{bounded},
it holds that $\hdim G(\infty,0)= \hdim E(\infty,0)= 1$.

If $\beta>0$ and $\alpha=\infty$,
it is easy to see that the Cantor type sets constructed in the proof of Theorem 3 in \cite{Bugeaud2011}
is a subset of of $E(\infty,\beta)$ where we take $\Psi(x)=x^{-2-\beta}$.
Then we have $\hdim E(\infty,\beta)\geq \frac{2}{\beta+2}$.
Since $E(\infty,\beta)\subset E_{\beta}$,
it holds that $\hdim E(\infty,\beta)=\frac{2}{\beta+2}$.

In what follows,
we always assume that $\beta>0$ and $0\leq \alpha<\infty$.
\subsection{Upper bound:}

For any $x\in G(\alpha,\beta)$,
we have $\tau(x)=\alpha$.
Hence, for any $\varepsilon>0$,
we deduce from the definition of
$\tau(x)$ that $\sum_{n\geq1}a_n(x)^{-(\alpha+\varepsilon)}<\infty$.
Thus $a_n(x)\to\infty$ as $n\to\infty$.
By Lemma \ref{qnxx} and the general form of the Stolz-Ces\`{a}ro theorem,
which states that if $\{\xi_n\}_{n\geq1}$ and $\{\eta_n\}_{n\geq1}$ are two sequences
such that $\{\eta_n\}_{n\geq1}$ is monotone and unbounded,
then
\begin{equation*}
\liminf_{n\to\infty}\dfrac{\xi_{n+1}-\xi_n}{\eta_{n+1}-\eta_n}
\leq \liminf_{n\to\infty}\dfrac{\xi_n}{\eta_n}
\leq \limsup_{n\to\infty}\dfrac{\xi_n}{\eta_n}
\leq \limsup_{n\to\infty}\dfrac{\xi_{n+1}-\xi_n}{\eta_{n+1}-\eta_n},
\end{equation*}
we have
$$
\liminf_{n\to\infty}\dfrac{\log q_n(x)}{n}
\geq \liminf_{n\to\infty}\dfrac{\log (a_1(x)a_2(x)\cdots a_n(x))}{n}
\geq \liminf_{n\to\infty} \log a_n(x)=\infty.
$$
Hence, we obtain
$$
G(\alpha,\beta)\subseteq \Big\{x\in G_{\beta}\colon \lim_{n\to\infty}\frac{\log q_n(x)}{n}=\infty\Big\}.
$$
By Lemma \ref{dimthm1},
we conclude that
$$
\hdim  G(\alpha,\beta)\leq\frac{2}{\beta+2+\sqrt{\beta^2+4}}.
$$
\subsection{Lower bound for the case $0<\alpha<\infty$}

To bound $\hdim E(\alpha,\beta)$ from below,
we adopt a method, applied successfully in \cite{STZ24} and \cite{ZhangLT},
that consists in inserting a suitable sequence into continued fraction expansions to
construct points in $E(\alpha,\beta)$.
\medskip

The points in $E(\alpha,\beta)$ could be constructed by three steps.
First,
let $N_0=\lfloor2^{\alpha}\rfloor+1$,
and
let
$$
F(\alpha)
=\Big\{x\in[0,1)\colon \lfloor(N_0+n)^{1/\alpha}\rfloor+1
\leq a_n(x)\leq2\lfloor(N_0+n)^{1/\alpha}\rfloor,\,\forall \,n\geq 1
\Big\}.
$$
Second,
let $\{n_j\}_{j\geq1}$  be a sequence of positive integers such that
$$
n_1=\max\left\{N_0,\lfloor(N_0+10)^{\frac{2N_0+20}{\alpha}}\rfloor\right\}+1\ \
\text{and}\ \ n_{j+1}=n_j^j\ (\forall\ j\geq1).
$$
Using the sequence $\{n_j\}_{j\geq1}$,
we then define a sequence $\{c_j\}_{j\geq1}$ and $\{e_j\}_{j\geq1}$ satisfying
\begin{equation}\label{cj}
 \lfloor n^{\lambda-1}_j\rfloor+1\leq c_j\leq2\lfloor n^{\lambda-1}_j\rfloor\ (\forall\ j\geq1)
 \end{equation}
and
\begin{equation}\label{ej}
 \lfloor n^{\lambda^2-\lambda}_j\rfloor+1\leq e_j\leq2\lfloor n^{\lambda^2-\lambda}_j\rfloor\ (\forall\ j\geq1)
 \end{equation}
 where $\lambda-\frac{1}{\lambda}=\beta$ and $\lambda=\frac{\beta+\sqrt{\beta^2+4}}{2}$.

Third,
we insert the sequence $\{c_j\}_{j\geq1}$ and $\{e_j\}_{j\geq1}$ into the continued fraction expansion of
 $x=[a_1,a_2,\cdots]\in F(\alpha)$ by putting
\begin{align*}
y=f(x)
&=[b_1,b_2,\cdots,b_n,\cdots]\\
&=[a_1,a_2,\cdots, a_{r_1},c_1,e_1,a_{r_1+1},\cdots,a_{r_{j-1}},c_{j-1},e_{j-1},a_{r_{j-1}+1},\cdots],
\end{align*}
where $r_j$ is the smallest $r$ such that
\begin{equation}
\label{zxrj}
q_{r+2j-2}(a_1,\cdots, a_{r_1},c_1,e_1,a_{r_1+1},\cdots,a_{r_{j-1}},c_{j-1},e_{j-1},a_{r_{j-1}+1},\cdots,a_r)
\geq n_j.
\end{equation}
Collect all these points $y$ and let
\[
A(\alpha,\beta)=\big\{y=f(x)\in(0,1)\colon x\in F(\alpha)\big\}.
\]
\begin{lem}\label{bhzj}
For any $0<\alpha<\infty$ and $\beta>0$, we have $A(\alpha,\beta)\subseteq E(\alpha,\beta)$.
\end{lem}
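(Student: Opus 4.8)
The plan is to verify, for every $y=f(x)$ with $x\in F(\alpha)$, the two defining conditions of $E(\alpha,\beta)$: that $\tau(y)=\alpha$ and that the relevant limsup equals $\beta$. Throughout I write $b_n=b_n(y)$ for the partial quotients of $y$ and $Q_n=q_n(y)$ for its denominators, and I set $M_j=r_j+2j-2$, so that $b_{M_j}=a_{r_j}$, $b_{M_j+1}=c_j$ and $b_{M_j+2}=e_j$ are the terminal $a$-term and the two inserted terms of the $j$-th block, and minimality of $r_j$ gives $Q_{M_j-1}<n_j\le Q_{M_j}$.

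First I would settle $\tau(y)=\alpha$. Since $x\in F(\alpha)$ has $a_n(x)\asymp n^{1/\alpha}$, the series $\sum_n a_n(x)^{-s}\asymp\sum_n n^{-s/\alpha}$ converges exactly when $s>\alpha$, so $\tau(x)=\alpha$. Passing from $x$ to $y$ only inserts the terms $c_j,e_j$; because $\lambda>1$ (as $\beta>0$) and $n_{j+1}=n_j^j$ forces $\log n_j$ to grow super-exponentially, both $\sum_j c_j^{-s}$ and $\sum_j e_j^{-s}$ converge for every $s>0$. Hence $\sum_n b_n^{-s}=\sum_i a_i(x)^{-s}+\sum_j\big(c_j^{-s}+e_j^{-s}\big)$ is finite precisely when $\sum_i a_i(x)^{-s}$ is, i.e. when $s>\alpha$, which gives $\tau(y)=\alpha$.

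The heart of the argument is the limsup, and the decisive preliminary estimate is $\log r_j=o(\log n_j)$. To obtain it I would use Lemma \ref{qnxx} in the form $\log Q_{M_j}=\sum_{i\le r_j}\log a_i+\sum_{k<j}(\log c_k+\log e_k)+o(\log n_j)$ (the error from the $+1$'s being controlled by $\sum_{i\le r_j}a_i^{-1}$). Here $\sum_{k<j}(\log c_k+\log e_k)$ is dominated by $(\lambda^2-1)\log n_{j-1}=\tfrac{\lambda^2-1}{j-1}\log n_j=o(\log n_j)$, while the definition of $r_j$ together with $Q_{M_j}\le(a_{r_j}+1)Q_{M_j-1}<(a_{r_j}+1)n_j$ and $a_{r_j}\asymp r_j^{1/\alpha}$ forces $\log Q_{M_j}=(1+o(1))\log n_j$. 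Thus $\tfrac{1}{\alpha}r_j\log r_j=(1+o(1))\log n_j$, whence $r_j\le C\log n_j$ and so $\log r_j=O(\log\log n_j)=o(\log n_j)$. Feeding this back through Lemma \ref{qnxx} (or Lemma \ref{sqn}), and using $\log c_j=(1+o(1))(\lambda-1)\log n_j$, $\log e_j=(1+o(1))(\lambda^2-\lambda)\log n_j$, yields the three denominator asymptotics $\log Q_{M_j}=(1+o(1))\log n_j$, $\log Q_{M_j+1}=(1+o(1))\lambda\log n_j$, $\log Q_{M_j+2}=(1+o(1))\lambda^2\log n_j$. With these the limsup splits into a few cases. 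At $n=M_j+1$ we have $b_nb_{n+1}=c_je_j$ with $\log(c_je_j)=(1+o(1))(\lambda^2-1)\log n_j$, so the ratio tends to $\tfrac{\lambda^2-1}{\lambda}=\lambda-\tfrac1\lambda=\beta$, giving $\limsup\ge\beta$. For the reverse inequality I would check that every other position gives ratio at most $\beta+o(1)$: at $n=M_j$ the ratio tends to $\lambda-1$, at $n=M_j+2$ it tends to $1-\tfrac1\lambda$, and inside an $a$-block $\log(b_nb_{n+1})=O(\log\log n_j)$ while $\log Q_n\ge(1+o(1))\lambda^2\log n_{j-1}$ forces it to $0$. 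Since $\lambda>1$ makes $\beta=\lambda-\tfrac1\lambda$ strictly exceed each of $\lambda-1$, $1-\tfrac1\lambda$ and $0$, the supremum over each block tends to $\beta$, so $\limsup_n\frac{\log(b_nb_{n+1})}{\log Q_n}=\beta$ and $y\in E(\alpha,\beta)$.

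The main obstacle I anticipate is the bookkeeping needed to turn the heuristic $\log r_j=o(\log n_j)$ into rigorous inequalities: one must bound $\sum_{i\le r_j}\log a_i$ two-sidedly against $\log n_j$ using the explicit constraints defining $F(\alpha)$, and then propagate the resulting $(1+o(1))$ factors cleanly through the recursion $Q_n=b_nQ_{n-1}+Q_{n-2}$ at the insertion points, keeping the $+1$'s and the $Q_{n-2}$ remainder under control via Lemmas \ref{qnxx} and \ref{sqn}. Once the three denominator asymptotics are secured, the case analysis for the limsup is routine.
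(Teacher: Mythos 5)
Your proposal is correct and follows essentially the same route as the paper's proof: you use the same Stirling-based consequence of the minimality of $r_j$ (the paper's $\log n_j>\frac{r_j}{4\alpha}$ and the estimate \eqref{xyqj}, which are your $r_j=O(\log n_j)$ and $\log Q_{M_j}=(1+o(1))\log n_j$), the same denominator asymptotics $\log q_{m_j+1}=(1+o(1))\lambda\log n_j$ and $\log q_{m_j+2}=(1+o(1))\lambda^2\log n_j$, and the same position-by-position case analysis giving the limits $\lambda-1$, $\beta=\frac{\lambda^2-1}{\lambda}$ and $1-\frac{1}{\lambda}$ at the three insertion positions with vanishing ratios inside the $a$-blocks. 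The only (cosmetic) divergence is in verifying $\tau(y)=\alpha$: you deduce convergence of $\sum_j c_j^{-s}$ and $\sum_j e_j^{-s}$ for every $s>0$ directly from the superexponential growth of $n_j$, whereas the paper reaches the same conclusion slightly more circuitously via $q_{m_j}\leq n_j^{1+\varepsilon}$ and the bound $q_n\geq 2^{\frac{n-1}{2}}$ of Lemma \ref{qnxx}.
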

\begin{proof}
From the definition of $E(\alpha,\beta)$,
we  need only to  prove that
\[\tau(y)=\alpha\ \ \text{and}\ \ \limsup\limits_{n\to\infty}\frac{\log (b_n(y)b_{n+1}(y))}{\log q_n(y)}=\beta\]
for any $y\in A(\alpha,\beta)$.
For this purpose,
we shall give some estimates.
By the smallest property of $r_j$ in \eqref{zxrj},
Lemma \ref{qnxx} and Stirling's formula which states that
$$\sqrt{2\pi}n^{n+\frac{1}{2}}e^{-n}\leq n!\leq en^{n+\frac{1}{2}}e^{-n},$$
we have
\begin{align*}
  n_j
&>q_{r_j+2j-3}(a_1,a_2,\cdots, a_{r_1},c_1,e_1,a_{r_1+1},\cdots,a_{r_j-1})\\
&\geq\prod_{k=1}^{r_j-1}a_k\geq ((r_j-1)!)^{\frac{1}{\alpha}}\geq (\frac{r_j-1}{e})^{\frac{r_j-1}{\alpha}}.
\end{align*}
This shows  that
\begin{equation*}
\log n_j> \frac{r_j-1}{\alpha}(\log (r_j-1)-1)\geq \frac{r_j-1}{2\alpha}\geq \frac{r_j}{4\alpha}.
\end{equation*}
Combining this inequality and Lemma \ref{sqn}, we have
\begin{align}\label{xyqj}
\nonumber
&\ q_{r_j+2j-2}(a_1,a_2,\cdots, a_{r_1},c_1,e_1,a_{r_1+1},\cdots,a_{r_{j-1}},c_{j-1},e_{j-1},a_{r_{j-1}+1},\cdots,a_{r_j})\\
&\leq (a_{r_j}+1)n_j\leq4(8\alpha\log n_j+N_0)^{\frac{1}{\alpha}} n_j.
\end{align}
For any $r_j+2j+1\leq n\leq r_{j+1}+2j$ for $j\geq1$,
we have $b_{n}(y)=a_i(x)$ for some $r_j+1\leq i\leq r_{j+1}$.
Then
$$
b_{n}(y)\leq a_{r_{j+1}}(x)\leq 2(8j\alpha\log n_j+N_0)^{\frac{1}{\alpha}},
$$
when
$q_n\geq n_j$.
By the choice of sequence $n_j$,
$\log (b_{n}(y)b_{n+1}(y))/\log q_n(y)$ can be arbitrarily small
for any $r_j+2j+1\leq n\leq r_{j+1}+2j-1$ when $j$ is large enough.
For $n=r_j+2j-2$ for some $j\geq1$,
we have
\begin{align*}
\lim\limits_{j\to\infty}\frac{\log( b_{r_j+2j-2}(y)b_{r_j+2j-1}(y))}{\log q_{r_j+2j-2}(y)}
=\lim\limits_{j\to\infty}\frac{\log a_{r_j}(x)c_j}{\log n_j}=\lambda-1.
\end{align*}

For $n=r_j+2j$ for some $j\geq1$,
we have
\begin{align*}
\lim\limits_{j\to\infty}\frac{\log( b_{r_j+2j}(y)b_{r_j+2j+1}(y))}{\log q_{r_j+2j}(y)}
=\lim\limits_{j\to\infty}\frac{\log e_j a_{r_j+1}(x)}{\log q_{r_j+2j}(y)}
=\lim\limits_{j\to\infty}\frac{(\lambda^2-\lambda)\log n_j}{\lambda^2\log n_j}=1-\frac{1}{\lambda}.
\end{align*}

Thus, it follows from \eqref{zxrj}, \eqref{xyqj}
and the above inequalities that
\begin{align*}
\limsup_{n\to \infty}\frac{\log (b_{n}(y)b_{n+1}(y))}{\log q_n(y)}
&=\lim\limits_{j\to\infty}\frac{\log (b_{r_j+2j-1}(y)b_{r_j+2j}(y))}{\log q_{r_j+2j-1}(y)}
=\lim\limits_{j\to\infty}\frac{\log c_je_j}{\lambda\log n_j}\\
&=\lim\limits_{j\to\infty}\frac{(\lambda^2-1)\log n_j}{\lambda\log n_j}
=\frac{\lambda^2-1}{\lambda}=\beta.
\end{align*}

In the following, we shall prove that $\tau(y)=\alpha$.
In fact,
by the construction of $F(\alpha)$,
it is equivalent  to prove that for any $\varepsilon>0$,
the series $\sum_{j\geq1}c^{-(\alpha+\varepsilon)}_j$
and $\sum_{j\geq1}e^{-(\alpha+\varepsilon)}_j$ converge.
By \eqref{xyqj},
for any $\varepsilon>0$,
we have $q_{r_j+2j-2}\leq n_j^{1+\varepsilon}$
for $j$ sufficiently large.
Then by Lemma \ref{qnxx}, \eqref{cj} and \eqref{ej},
there exists some constant $C_0$ such that
\begin{align*}
\sum\limits_{j\geq1}c_j^{-(\alpha+\varepsilon)}+\sum\limits_{j\geq1}e_j^{-(\alpha+\varepsilon)}
&\leq\sum\limits_{j\geq1}n_j^{-(\alpha+\varepsilon)(\lambda-1)}
       +\sum\limits_{j\geq1}n_j^{-(\alpha+\varepsilon)(\lambda^2-\lambda)}\\
&\leq\sum\limits_{j\geq1}q_{r_j+2j-2}^{\frac{-(\alpha+\varepsilon)(\lambda-1)}{1+\varepsilon}}
       +\sum\limits_{j\geq1}q_{r_j+2j-2}^{\frac{-(\alpha+\varepsilon)(\lambda^2-\lambda)}{1+\varepsilon}}+C_0\\
&\leq\sum\limits_{j\geq1}2^{\frac{r_j+2j-2}{2}\cdot\frac{-(\alpha+\varepsilon)(\lambda-1)}{1+\varepsilon}}
       +\sum\limits_{j\geq1}2^{\frac{r_j+2j-2}{2}\cdot\frac{-(\alpha+\varepsilon)(\lambda^2-\lambda)}{1+\varepsilon}}+C_0\\
&\leq\sum\limits_{j\geq1}2^{(j-1)\cdot\frac{-(\alpha+\varepsilon)(\lambda-1)}{1+\varepsilon}}
       +\sum\limits_{j\geq1}2^{(j-1)\cdot\frac{-(\alpha+\varepsilon)(\lambda^2-\lambda)}{1+\varepsilon}}+C_0<\infty.
\end{align*}
\end{proof}

Now, we turn to estimate the lower bound of $\hdim  A(\alpha,\beta)$.
To do this,
we will analyze the structure of $A(\alpha,\beta)$,
construct a measure supported on $A(\alpha,\beta)$,
then use the mass distribution principle to get
the lower bound of the Hausdorff dimension of $A(\alpha,\beta)$.
\subsubsection{\textbf{Structure of $A(\alpha,\beta)$}}
To illustrate the structure of $A(\alpha,\beta)$,
we shall make use of a kind of symbolic space as follows. For simplicity, denoted by
$m_0=-2$ and $m_j=r_j+2(j-1)\ (\forall\ j\geq1)$.
For any $n\geq 1$, let
\begin{align*}
D_n(\alpha,\beta)=
\Big\{(\sigma_1,\cdots,\sigma_n)&\in \mathbb{N}^n\colon \lfloor(N_0+k-2j)^{\frac{1}{\alpha}}\rfloor+1
  \leq \sigma_k\leq2\lfloor(N_0+k-2j)^{\frac{1}{\alpha}}\rfloor\,\,\,\text{for}\,\,\,\\
& m_j+2<k\leq m_{j+1}\ (\forall j\geq0)\ \text{and}\ \lfloor n_j^{\lambda-1 }\rfloor+1
  \leq \sigma_{m_j+1}\leq2 \lfloor n_j^{\lambda-1 }\rfloor,\,\,\,\\
& \lfloor n_j^{\lambda^2-\lambda}\rfloor+1
  \leq \sigma_{m_j+2}\leq2 \lfloor n_j^{\lambda^2-\lambda}\rfloor\ (\forall j\geq1)
\Big\}
\end{align*}
and
$$
D(\alpha,\beta)=\bigcup_{n=0}^{\infty}D_n(\alpha,\beta),\,\,\,\,(D_0(\alpha,\beta):=\emptyset).
$$
Then for any $(\sigma_1,\cdots,\sigma_n)\in D_n(\alpha,\beta)$,
we call
\begin{equation*}
J_n(\sigma_1,\cdots,\sigma_n)=\bigcup_{\sigma_{n+1}}\emph{cl}\big(I_{n+1}(\sigma_1,\cdots,\sigma_n,\sigma_{n+1})\big)
\end{equation*}
a \emph{fundamental interval of order} $n$,
where  'cl' denotes the closure of a set and the union is taken over all $\sigma_{n+1}$ such that
$(\sigma_1,\cdots,\sigma_n,\sigma_{n+1})\in D_{n+1}(\alpha,\beta)$.
It is easy to see that
\begin{equation*}
A(\alpha,\beta)=\bigcap_{n\geq1}\bigcup_{(\sigma_1,\cdots,\sigma_n)\in D_n(\alpha,\beta)}J_n(\sigma_1,\cdots,\sigma_n).
\end{equation*}
By \eqref{ppqq} and Proposition \ref{cd}, we shall estimate the length of the fundamental interval of order $n$.
If $m_j+1<n<m_{j+1}$  for any $j\geq 1$,
\begin{align}\label{jnj}
\nonumber
&\ |J_n(\sigma_1,\cdots,\sigma_n)|\\
\nonumber&=\sum\limits_{\lfloor(N_0+n+1-2j)^{\frac{1}{\alpha}}\rfloor+1
               \leq \sigma_{n+1}\leq2\lfloor(N_0+n+1-2j)^{\frac{1}{\alpha}}\rfloor}
                                   |I_{n+1}(\sigma_1,\cdots,\sigma_n,\sigma_{n+1})|\\
\nonumber&=\sum\limits_{\lfloor(N_0+n+1-2j)^{\frac{1}{\alpha}}\rfloor+1
            \leq \sigma_{n+1}\leq2\lfloor(N_0+n+1-2j)^{\frac{1}{\alpha}}\rfloor}
\frac{1}{(\sigma_{n+1}q_n+q_{n-1})((\sigma_{n+1}+1)q_n+q_{n-1})}\\
&=\frac{\lfloor(N_0+n+1-2j)^{\frac{1}{\alpha}}\rfloor}
{\left((\lfloor(N_0+n+1-2j)^{\frac{1}{\alpha}}\rfloor+1)q_n+q_{n-1}\right)
\left((2\lfloor(N_0+n+1-2j)^{\frac{1}{\alpha}}\rfloor+1)q_n+q_{n-1}\right)}.
\end{align}
If $n=m_j$ or  $n=m_j+1$ for some $j\geq 1$,
by the similar computation as above,
we have
\begin{equation}\label{eq2}
|J_{m_j}(\sigma_1,\cdots,\sigma_{m_j})|=\frac{\lfloor n_j^{\lambda-1 }\rfloor}
{\left((\lfloor n_j^{\lambda-1 }\rfloor+1)q_{m_j}+q_{{m_j}-1}\right)
\left((2\lfloor n_j^{\lambda-1 }\rfloor+1)q_{m_j}+q_{{m_j}-1}\right)}
\end{equation}
and
\begin{equation}\label{eq2-1}
|J_{m_j+1}(\sigma_1,\cdots,\sigma_n)|=\frac{\lfloor n_j^{\lambda^2-\lambda}\rfloor}
{\left((\lfloor n_j^{\lambda^2-\lambda}\rfloor+1)q_{m_j+1}+q_{m_j}\right)
\left((2\lfloor n_j^{\lambda^2-\lambda}\rfloor+1)q_{m_j+1}+q_{m_j}\right)}.
\end{equation}

%%========================================================================================================
\subsubsection{\textbf{A measure supported on $A(\alpha,\beta)$}}

Let $\mu\colon \{J(\sigma),\sigma\in D(\alpha,\beta)\}\to \mathbb{R}^+$ be a set function defined as follows.
For any $(\sigma_1,\cdots,\sigma_n)\in D_n(\alpha,\beta)$,
\[
\mu (J_n(\sigma_1,\cdots,\sigma_n))=\frac{1}{\sharp D_n(\alpha,\beta)}.
\]
More precisely, if $m_j+2<n\leq m_{j+1}$ for any $j\geq1$,
\begin{equation}\label{udy2}
\mu (J_n(\sigma_1,\cdots,\sigma_n))
=\frac{1}{\lfloor(N_0+n-2j)^{\frac{1}{\alpha}}\rfloor}\mu (J_{n-1}(\sigma_1,\cdots,\sigma_{n-1})),
\end{equation}
if $n=m_j+1$ for some $j\geq1$,
\begin{equation}\label{udy3}
\mu (J_{n}(\sigma_1,\cdots,\sigma_n))
=\frac{1}{\lfloor n_j^{\lambda-1 }\rfloor}\mu (J_{n-1}(\sigma_1,\cdots,\sigma_{n-1})),
\end{equation}
and if $n=m_j+2$ for some $j\geq1$,
\begin{equation}\label{udy3}
\mu (J_{n}(\sigma_1,\cdots,\sigma_n))
=\frac{1}{\lfloor n_j^{\lambda^2-\lambda}\rfloor}\mu (J_{n-1}(\sigma_1,\cdots,\sigma_{n-1})).
\end{equation}
It is easy to check that all the fundamental intervals $J_{n}(\sigma_1,\cdots,\sigma_{n})$ generate a semi-algebra.
Note that for $n\geq1$ and $(\sigma_1,\cdots,\sigma_n)\in D_n(\alpha,\beta)$,
\[\sum_{(\sigma_1,\cdots,\sigma_n)\in D_n(\alpha,\beta)}\mu (J_{n}(\sigma_1,\cdots,\sigma_{n}))=1,\]
and
\[
\mu (J_n(\sigma_1,\cdots,\sigma_n))
=\sum_{(\sigma_1,\cdots,\sigma_n,\sigma_{n+1})\in D_{n+1}(\alpha,\beta)}\mu (J_{n+1}(\sigma_1,\cdots,\sigma_{n+1})).
\]
Then the set function $\mu$ is well-defined on the semi-algebra.
Thus by Hahn-Kolmogorov extension theorem (see \cite[Theorem 11.3]{Billingsley}),
we know that the set function $\mu$ can be extended into a probability measure supported on $A(\alpha,\beta)$,
which is still denoted by $\mu$.

%%========================================================================================================
\subsubsection{\textbf{Estimation of $\mu (J_n(\sigma_1,\cdots,\sigma_n))$}}
For each $(\sigma_1,\cdots,\sigma_n)\in D_n(\alpha,\beta)$,
in the following we shall give the estimation of $\mu (J_n(\sigma_1,\cdots,\sigma_n))$.
Fix $0<t<\frac{1}{\lambda+1}=\frac{2}{\beta+2+\sqrt{\beta^2+4}}$ and let $\varepsilon$ be small such that
\[0<\varepsilon<\min\Big\{\frac{1-(\lambda+1)t}{3t+1},\ \frac{1}{2}-t\Big\}.\]
Based on Lemma \ref{qnxx} and Stirling's formula,
we can choose $j_0$ sufficiently large such that
$q_{r_j+2j-2}\leq n_j^{1+\varepsilon}$ for any $j\geq j_0$
and
for any $n\geq m_{j_0}$, $m_j+1\leq n\leq m_{j+1}$ for some $j\geq j_0$,
\begin{align}\label{ineq1}
\nonumber q^{\varepsilon}_n(\sigma_1,\cdots,\sigma_n)
&\geq\Big(\big(\lfloor (N_0+n-2j)^{\frac{1}{\alpha}}\rfloor\big)!\Big)^{\varepsilon}\\
&\geq\big(\frac{N_0+n-2j}{e}\big)^{(N_0+n-2j)\cdot\frac{1}{2\alpha}\varepsilon}
\geq2^{n+3}\lfloor (N_0+n-2j)^{\frac{1}{\alpha}}\rfloor,
\end{align}
where the last inequality holds since $n-2j\geq r_j-1$ and $r_j$ tends to infinity.
In view of \eqref{zxrj} and \eqref{xyqj},
we have
\begin{equation}\label{ineq2}
n_j\leq q_{m_j}=q_{r_j+2j-2}<n_j^{1+\varepsilon}.
\end{equation}
If $m_j+1<n<m_{j+1}$ for any $j\geq1$, by Lemma \ref{qnxx}, \eqref{jnj} and \eqref{ineq1},
\begin{align}\label{ineq-est-mea1}
\nonumber \mu (J_n(\sigma_1,\cdots,\sigma_n))
&=\prod_{k=1}^{n-2j}\frac{1}{\lfloor(N_0+k)^{\frac{1}{\alpha}}\rfloor}
   \cdot\prod_{i=1}^{j}\frac{1}{\lfloor n_i^{\lambda-1 }\rfloor}
   \cdot\prod_{i=1}^{j}\frac{1}{\lfloor n_i^{\lambda^2-\lambda }\rfloor}\leq \frac{2^n}{q_n}\\
&\leq \left(\frac{1}{8\lfloor(N_0+n-2j)^{\frac{1}{\alpha}}\rfloor q_n^2}\right)^{\frac{1}{2}-\varepsilon}
    \leq |J_n(\sigma_1,\cdots,\sigma_n)|^{t}.
\end{align}
If $n=m_j$ for some $j\geq 1$,
then by Lemma \ref{qnxx}, \eqref{eq2}, \eqref{ineq1} and \eqref{ineq2},
\begin{align}\label{ineq-est-mea2}
\nonumber \mu (J_n(\sigma_1,\cdots,\sigma_n))
&=\prod_{k=1}^{n-2j+2}\frac{1}{\lfloor(N_0+k)^{\frac{1}{\alpha}}\rfloor}
   \cdot\prod_{i=1}^{j-1}\frac{1}{\lfloor n_i^{\lambda-1 }\rfloor}
   \cdot\prod_{i=1}^{j-1}\frac{1}{\lfloor n_i^{\lambda^2-\lambda }\rfloor}
   \leq \frac{2^{m_j}}{q_{m_j}}\leq \frac{1}{n_j^{1-\varepsilon}}\\
&\leq \left(\frac{1}{n_j^{\lambda+1+3\varepsilon}}\right)^{t}
\leq\left(\frac{1}{32\lfloor n_j^{\lambda-1}\rfloor q_{m_j}^2}\right)^{t}
\leq |J_n(\sigma_1,\cdots,\sigma_n)|^{t}.
\end{align}

If $n=m_j+1$ for some $j\geq 1$,
then by Lemma \ref{qnxx}, \eqref{eq2-1}, \eqref{ineq1} and \eqref{ineq2},
\begin{align}\label{ineq-est-mea2}
\nonumber \mu (J_n(\sigma_1,\cdots,\sigma_n))
&=\prod_{k=1}^{n-2j+1}\frac{1}{\lfloor(N_0+k)^{\frac{1}{\alpha}}\rfloor}
   \cdot\prod_{i=1}^{j}\frac{1}{\lfloor n_i^{\lambda-1 }\rfloor}
   \cdot\prod_{i=1}^{j-1}\frac{1}{\lfloor n_i^{\lambda^2-\lambda }\rfloor}
   \leq \frac{2^{m_j+1}}{q_{m_j+1}}\leq \frac{1}{n_j^{\lambda-\varepsilon}}\\
&\leq \left(\frac{1}{n_j^{\lambda^2+\lambda+3\varepsilon}}\right)^{t}
\leq\left(\frac{1}{32\lfloor n_j^{\lambda^2-\lambda}\rfloor q_{m_j+1}^2}\right)^{t}
\leq |J_n(\sigma_1,\cdots,\sigma_n)|^{t}.
\end{align}
\subsubsection{\textbf{Estimation of $\mu (B(y,r))$}}

%\subsubsection{\textbf{Gaps in} $A(\alpha,\beta)$}
Before the estimation of measure of any small ball,
we shall estimate the gaps between the adjoint fundamental intervals with the same order.
We assume without loss of generality that $n$ is even,
since the process for $n$ being odd runs similarly.\\
\indent Let $g^r(\sigma_1,\cdots,\sigma_n)$ be the distance between $J_n(\sigma_1,\cdots,\sigma_n)$
and the fundamental interval of order $n$
which is closest to and lies on the right of $J_n(\sigma_1,\cdots,\sigma_n)$,
$g^l(\sigma_1,\cdots,\sigma_n)$ be the distance between $J_n(\sigma_1,\cdots,\sigma_n)$
and the fundamental interval of order $n$
which lies on the left of $J_n(\sigma_1,\cdots,\sigma_n)$  and is closest to it.
Denote
\[\hat{g}(\sigma_1,\cdots,\sigma_n)=\min\{g^r(\sigma_1,\cdots,\sigma_n),g^l(\sigma_1,\cdots,\sigma_n)\}.\]

By the structure of $A(\alpha,\beta)$,
$g^r(\sigma_1,\cdots,\sigma_n)$ is just the distance between the right endpoint of $J_n(\sigma_1,\cdots,\sigma_n)$ and
the right endpoint of $I_n(\sigma_1,\cdots,\sigma_n)$,
and
$g^l(\sigma_1,\cdots,\sigma_n)$ is just the distance between the left endpoint of $J_n(\sigma_1,\cdots,\sigma_n)$ and
the left endpoint of $I_n(\sigma_1,\cdots,\sigma_n)$.
\begin{itemize}
  \item (i)
If $m_j+1<n<m_{j+1}$  for any $j\geq 1$,
then we have
\begin{align*}
g^r(\sigma_1,\cdots,\sigma_n)
&=\frac{p_n+p_{n-1}}{q_n+q_{n-1}}-
  \frac{\big(\lfloor(N_0+n+1-2j)^{\frac{1}{\alpha}}\rfloor+1\big)p_n+p_{n-1}}
  {\big(\lfloor(N_0+n+1-2j)^{\frac{1}{\alpha}}\rfloor+1\big)q_n+q_{n-1}}  \\
&=\frac{\lfloor(N_0+n+1-2j)^{\frac{1}{\alpha}}\rfloor}
{\big((\lfloor(N_0+n+1-2j)^{\frac{1}{\alpha}}\rfloor+1)q_n+q_{n-1}\big)(q_n+q_{n-1})}{\color{red},}
\end{align*}

\begin{align*}
g^l(\sigma_1,\cdots,\sigma_n)
&=\frac{2\lfloor(N_0+n+1-2j)^{\frac{1}{\alpha}}\rfloor p_n+p_{n-1}}
{ 2\lfloor(N_0+n+1-2j)^{\frac{1}{\alpha}}\rfloor q_n+q_{n-1}}
   -\frac{p_n}{q_n} \\
&=\frac{1}{\big(2\lfloor(N_0+n+1-2j)^{\frac{1}{\alpha}}\rfloor q_n+q_{n-1}\Big)q_n}.
\end{align*}
Hence,
\begin{equation}
\label{eq4}
\hat{g}(\sigma_1,\cdots,\sigma_n)
=\frac{1}{\big(2\lfloor(N_0+n+1-2j)^{\frac{1}{\alpha}}\rfloor q_n+q_{n-1}\big)q_n}.
\end{equation}

  \item (ii) If $n=m_j$ or $n=m_j+1$ for some $j\geq 1$,
by the same analysis as above,
we have

%\begin{align*}
%g^r(b_1,\cdots,b_n)
%&=\frac{p_n+p_{n-1}}{q_n+q_{n-1}}-
%  \frac{(\lfloor n_j^{\lambda-1 }\rfloor+1)p_n+p_{n-1}}{(\lfloor n_j^{\lambda-1 }\rfloor+1)q_n+q_{n-1}}\\
%&=\frac{\lfloor n_j^{\lambda-1 }\rfloor}
%{\big((\lfloor n_j^{\lambda-1 }\rfloor+1)q_n+q_{n-1}\big)(q_n+q_{n-1})}.
%\end{align*}
%
%\[
%g^l(\sigma_1,\cdots,\sigma_n)
%=\frac{(2\lfloor n_j^{\lambda-1 }\rfloor)p_n+p_{n-1}}{(2\lfloor n_j^{\lambda-1 }\rfloor)q_n+q_{n-1}}
%   -\frac{p_n}{q_n}
%=\frac{1}{\big((2\lfloor n_j^{\lambda-1 }\rfloor)q_n+q_{n-1}\big)q_n}.
%\]
%Hence,
\begin{equation}
\label{eq5}
\hat{g}(\sigma_1,\cdots,\sigma_{m_j})
=\frac{1}{\big(2\lfloor n_j^{\lambda-1 }\rfloor q_{m_j}+q_{{m_j}-1}\big)q_{m_j}}{\color{red},}
\end{equation}
and
\begin{equation}
\label{eq6}
\hat{g}(\sigma_1,\cdots,\sigma_{m_j+1})
=\frac{1}{\big( 2\lfloor n_j^{\lambda^2-\lambda}\rfloor q_{m_j+1}+q_{m_j}\big)q_{m_j+1}}.
\end{equation}
\end{itemize}

%From \eqref{jnj}, \eqref{eq2}, \eqref{eq4}, \eqref{eq5},and \eqref{eq6},
%we have for any $n\geq 1$,
%\begin{equation}\label{eq7}
%\hat{g}(\sigma_1,\cdots,\sigma_n)
%\geq\frac{1}{2}|J_n(\sigma_1,\cdots,\sigma_n)|.
%\end{equation}

Let $n\geq1$ and $(\sigma_1,\cdots,\sigma_n)\in D_n(\alpha,\beta)$,
and for any $j\geq 1$, write
\begin{equation*}
g(\sigma_1,\cdots,\sigma_n):=\left\{
\begin{split}
&\frac{1}{\big(2\lfloor(N_0+n+1-2j)^{\frac{1}{\alpha}}\rfloor q_n+q_{n-1}\big)q_n},\,\,\,\,\,\,\,\,m_j+1<n<m_{j+1},\\
&\frac{1}{\big(2\lfloor n_j^{\lambda-1 }\rfloor q_{m_j}+q_{{m_j}-1}\big)q_{m_j}},\,\,\,\,\,\,\hspace{1.8cm}n=m_j,\\
&\frac{1}{\big(2\lfloor n_j^{\lambda^2-\lambda}\rfloor q_{m_j+1}+q_{m_j}\big)q_{m_j+1}},\hspace{1.7cm}n=m_j+1.
\end{split}\right.
\end{equation*}
Then we have
$$
g(\sigma_1,\cdots,\sigma_n)\geq |J_n(\sigma_1,\cdots,\sigma_n)|.
$$
From \eqref{eq4} and \eqref{eq5},
we know that the above definitions of $g(\sigma_1,\cdots,\sigma_n)$
gives a lower bound of the gaps on both sides of the fundamental interval $J_n(\sigma_1,\cdots,\sigma_n)$.

Fix $y\in A(\alpha,\beta)$ and take
\[r_0=\min_{1\leq k\leq m_{j_0}}\min_{(\sigma_1,\cdots,\sigma_k)\in D_k(\alpha,\beta)}
g(\sigma_1,\cdots,\sigma_k).\]
Then for any $0<r<r_0$,
there exists a unique sequence $\sigma_1,\sigma_2,\cdots$ such that $y\in J_j(\sigma_1,\cdots,\sigma_j)$
for all $j\geq 1$ and for some $n\geq m_{j_0+1}$,
$$
g(\sigma_1,\cdots,\sigma_{n+1})\leq r<g(\sigma_1,\cdots,\sigma_n).
$$
Thus the definition of $g(\sigma_1,\cdots,\sigma_n)$ implies that
the ball $B(y,r)$ can intersect only one fundamental interval of order $n$, that is $J_n(\sigma_1,\cdots,\sigma_n)$.\\
\indent Let us estimate the $\mu$-measure of each ball $B(y,r)$ with $y\in A(\alpha,\beta)$ and $r>0$.
If $m_j+1<n<m_{j+1}$ for any $j\geq1$,
then we deduce from \eqref{udy2} and \eqref{ineq-est-mea1} that
\begin{align}\label{q5}
\nonumber\mu(B(y,r))
&\leq \mu(J_n(\sigma_1,\cdots,\sigma_n))
=\lfloor(N_0+n+1-2j)^{\frac{1}{\alpha}}\rfloor\cdot\mu(J_{n+1}(\sigma_1,\cdots,\sigma_{n+1}))\\
\nonumber&\leq \lfloor(N_0+n+1-2j)^{\frac{1}{\alpha}}\rfloor\cdot|J_{n+1}(\sigma_1,\cdots,\sigma_{n+1})|^t\\
&\leq  \lfloor(N_0+n+1-2j)^{\frac{1}{\alpha}}\rfloor\cdot|g(\sigma_1,\cdots,\sigma_{n+1})|^t
\leq  r^{t-\varepsilon}.
\end{align}
If $n=m_j$ for some $j>j_0$, then we divide the case into two parts.\\
\indent (i) $r\leq |I_{m_j+1}(\sigma_1,\cdots,\sigma_{m_j+1})|$. In this case,
the ball $B(y,r)$ can intersect at most three basic intervals of order $m_j+1$,
which are $I_{m_j+1}(\sigma_1,\cdots,\sigma_{m_j+1}-1)$, $I_{m_j+1}(\sigma_1,\cdots,\sigma_{m_j+1})$
and $I_{m_j+1}(\sigma_1,\cdots,\sigma_{m_j+1}+1)$.
Then by \eqref{ineq-est-mea1},
\begin{align}\label{ineq5}
\nonumber\mu(B(y,r))&\leq3\mu(J_{m_j+1}(\sigma_1,\cdots,\sigma_{m_j+1}))
\leq 3|(J_{m_j+1}(\sigma_1,\cdots,\sigma_{m_j+1}))|^t\\
&\leq 3|g(\sigma_1,\cdots,\sigma_{m_j+1})|^t\leq 3r^t.
\end{align}

(ii) $r> |I_{m_j+1}(\sigma_1,\cdots,\sigma_{m_j+1})|$.
In this case, notice that
\[
|I_{m_j+1}(\sigma_1,\cdots,\sigma_{m_j+1})|
=\frac{1}{q_{m_j+1}(q_{m_j+1}+q_{m_j})}
\geq \frac{1}{8\lfloor n_j^{\lambda-1}\rfloor^2 q_{m_j}^2}{\color{red}.}
\]
Then, the number of fundamental intervals of order $m_j+1$ contained in $J_{m_j}(\sigma_1,\cdots,\sigma_{m_j})$,
intersecting with the ball $B(y,r)$,
is at most
$$
\frac{2r}{|I_{m_j+1}(\sigma_1,\cdots,\sigma_{m_j+1})|}
\leq 16r\cdot\lfloor n_j^{\lambda-1}\rfloor^2\cdot q_{m_j}^2+2
\leq 32 r\cdot\lfloor n_j^{\lambda-1}\rfloor^2q_{m_j}^2.
$$
Notice that $8r\cdot\lfloor n_j^{\lambda-1}\rfloor^2q_{m_j}^2>1$.
Then by \eqref{udy3} and \eqref{ineq-est-mea2},
we have
\begin{align}\label{gj7}
\mu(B(y,r))
\nonumber&\leq \min\Big\{\mu(J_{m_j}(\sigma_1,\cdots,\sigma_{m_j})),
             32 r\cdot\lfloor n_j^{\lambda-1}\rfloor^2q_{m_j}^2
            \mu(J_{m_j+1}(\sigma_1,\cdots,\sigma_{m_j+1}))\Big\}\\
\nonumber&\leq \mu(J_{m_j}(\sigma_1,\cdots,\sigma_{m_j}))
            \cdot\min\Big\{1, 32 r\cdot\lfloor n_j^{\lambda-1}\rfloor^2q_{m_j}^2
            \frac{1}{\lfloor n_j^{\lambda-1}\rfloor}\Big\}\\
\nonumber&\leq |(J_{m_j}(\sigma_1,\cdots,\sigma_{m_j}))|^t
            \cdot\min\Big\{1, 32 r\lfloor n_j^{\lambda-1}\rfloor q_{m_j}^2\Big \}\\
&\leq \left(\frac{1}{\lfloor n_j^{\lambda-1}\rfloor q_{m_j}^2}\right)^t
             \cdot 1^{1-t}\cdot (32 r\cdot\lfloor n_j^{\lambda-1}\rfloor q_{m_j}^2)^t\leq 32r^t,
\end{align}
where the last inequality is valid by using \eqref{eq2} and the elementary inequality
$\min\left\{a, b\right\}\leq a^{1-s}\cdot b^s$
which holds for any $a,b>0$ and $0<s<1$.

If $n=m_j+1$ for some $j>j_0$,
by the same analysis as that for the case $n=m_j$,
we still have
\begin{equation}
\label{gj8}
\mu(B(y,r))\leq 32r^t.
\end{equation}

Together with \eqref{q5}, \eqref{ineq5}, \eqref{gj7} and \eqref{gj8},
we deduce from Lemma \ref{mass principle}  that
\[\hdim A(\alpha,\beta)\geq t-\varepsilon. \]
Since $t<\frac{1}{\lambda+1}$ and $\varepsilon$ can be made arbitrarily small, we conclude that
\[
\hdim  A(\alpha,\beta)\geq \frac{1}{\lambda+1}=\frac{2}{\beta+2+\sqrt{\beta^2+4}}.
\]

\subsection{Lower bound for the case $\alpha=0$}

Using the same idea as in the case $0<\alpha<\infty$,
we shall construct a subset $A(0,\beta)$ of $E(0,\beta)$,
and then  use Lemma \ref{mass principle} to obtain the lower bound of the Hausdorff dimension of $E(0,\beta)$.
Since the computation is very similar,
here we only give the construction of the subset $A(0,\beta)$.

Let
\[F(0)=\big\{x\in(0,1):\ \lfloor e^{n}\rfloor+1\leq a_n(x)\leq\lfloor2e^{n}\rfloor,\ \ \forall\ n\geq1\big\}.\]
Then we insert the sequence $\{c_j\}_{j\geq1}$ and $\{e_j\}_{j\geq1}$
defined by \eqref{cj} and \eqref{ej} into the continued fraction expansion of $x=[a_1,a_2,\cdots]\in F(0)$
by putting
\begin{align*}
y=f(x)&=[b_1,b_2,\cdots,b_n,\cdots]\\
&=[a_1,a_2,\cdots, a_{r_1},c_1,e_1,a_{r_1+1},\cdots,a_{r_{j-1}},c_{j-1},e_{j-1},a_{r_{j-1}+1},\cdots],
\end{align*}
where $r_j$ is the smallest $r$ satisfying \eqref{zxrj}.
Let
\[
A(0,\beta):=\big\{y=f(x)\in(0,1)\colon x\in F(0)\big\}.
\]

By the condition \eqref{zxrj}, Lemma \ref{qnxx} and Stirling's formula,
we have
\[
  n_j>q_{r_j+2j-3}(a_1,a_2,\cdots, a_{r_1},c_1,e_1,a_{r_1+1},\cdots,a_{r_j-1})
 \geq\prod_{k=1}^{r_j-1}a_k\geq e^{\sum_{k=1}^{r_j-1}k}\geq e^{\frac{r^2_j}{4}}.
\]
Combining this inequality with Lemma \ref{sqn},
we have
\begin{align*}
&\ q_{r_j+2j-2}(a_1,a_2,\cdots, a_{r_1},c_1,e_1,a_{r_1+1},\cdots,a_{r_{j-1}},c_{j-1},e_{j-1},a_{r_{j-1}+1},\cdots,a_{r_j})\\
&\leq (a_{r_j}+1)n_j\leq4e^{2\sqrt{\log n_j}}\cdot n_j.
\end{align*}
Then we obtain $A(0,\beta)\subseteq E(0,\beta)$ by the same analysis as the case $0<\alpha<\infty$.
By following the proof of the above case step by step,
we can get the same conclusion.

{\bf Acknowledgement:}
The authors show their sincere appreciations to the anonymous referees for careful
reading and helpful comments.
Zhenliang Zhang is supported by project of
the Science and Technology Research Program of Chongqing Municipal Education Commission (No.KJQN202100528),
Natural Science Foundation of Chongqing (No.CSTB2022NSCQ-MSX1255).


\begin{thebibliography}{10}

%\addtolength{\itemsep}{-1.5 em} % 缩小参考文献间的垂直间距
%\setlength{\itemsep}{-5pt}

\bibitem{Billingsley}
P. Billingsley,
\emph{Probability and Measure}, Wiley, 1995.

\bibitem{Bugeaud2011}
Y. Bugeaud, C. G. Moreira,
\emph{Sets of exact approximation order by rational numbers III},
Acta Arith. 146 (2011), 177-193.

\bibitem{DS1970}
H. Davenport and W. M. Schmidt,
\emph{Dirichlet's theorem on diophantine approximation},
In: Symposia mathematica, Vol. IV (INDAM, Rome, 1968/69).
London: Academic Press; 1970, 113-132.

\bibitem{Fal97}  K. J. Falconer,
\emph{Techniques in Fractal Geometry}, Wiley, 1997.

%\bibitem{FLWW09} A. H. Fan, L. M. Liao, B. W. Wang, J. Wu,
%\emph{On Khintchine exponents and Lyapunov exponents of continued fractions},
%Ergodic Theory Dynam. Systems, 29 (2009), 73-109.


\bibitem{FMSW21} L. L. Fang, J. H. Ma, K. K. Song, M. Wu,
\emph{Multifractal analysis of the convergence exponent in continued fractions},
Acta Math. Sci. Ser. B, 41 (2021), 1896-1910.

%\bibitem{FMSY24} L.L. Fang, J.H. Ma, K.K. Song and X. Yang,
%\emph{Multifractal analysis of the convergence exponents for products of
%consecutive partial quotients in continued fractions}, accepted by Acta Mathematica Scientia.

 \bibitem{Fengwuliang} D. J. Feng, J. Wu, J. C. Liang and S. Tseng,
\emph{Appendix to the paper by T. {\L}uczak--A simple proof of the lower bound:
``On the fractional dimension of sets of continued fractions''},
Mathematika, 44(1) (1997), 54-55.

\bibitem{Fengxu}
J. Feng, J. Xu,
\emph{Sets of Dirichlet non-improvable numbers with certain order in the theory of continued fractions},
Nonlinearity, 34 (2021), 1598-1611.





\bibitem{Good}
I. J. Good,
\emph{The fractional dimensional theory of continued fractions},
Proc. Cambridge Philos. Soc., 37 (1941), 199-228.

\bibitem{Hirst}
K. E. Hirst,
\emph{A problem in the fractional dimension theory of continued fractions},
Quart. J. Math. Oxford Ser., 21(2) (1970), 29-35.

\bibitem{Huangwu}
L. Huang, J. Wu,
\emph{Uniformly non-improvable Dirichlet set via continued fractions},
Proc. Am. Math. Soc., 147 (2019), 4617-24.

\bibitem{HKWW2018}
M. Hussain, D. Kleinbock, N. Wadleigh and B. W. Wang,
\emph{Hausdorff measure of sets of Dirichlet non-improvable numbers},
Mathematika, 64 (2018), 502-518.


\bibitem{IK02}
M. Iosifescu, C. Kraaikamp,
 \emph{Metrical Theory of Continued Fractions},
 Mathematics and Its Applications, Kluwer Academic Publishers, Dordrecht, 2002.

%\bibitem{lesJM}
%S. Jaffard, B. Martin,
%\emph{Multifractal analysis of the Brjuno function},
%Invent. Math., 212 (2018), 109-132.



\bibitem{Jarnik1928}
I. Jarn\'ik,
\emph{Zur metrischen Theorie der diopahantischen Approximationen},
 Proc. Mat. Fyz., 36 (1928), 91-106.


\bibitem{KW2018}
D. Kleinbock and N. Wadleigh,
\emph{A zero-one law for improvements to Dirichlet's Theorem},
Proc. Amer. Math. Soc., 146(5) (2018), 1833-1844.



%\bibitem{lesKS}
%M. Kesseb\"{o}hmer, B. Stratmann,
%\emph{Fractal analysis for sets of non-differentiability of Minkowski's question mark function},
%J. Number Theory, 128 (2008), 2663-2686.



\bibitem{Khi64}
A. Ya Khinchin,
\emph{Continued Fractions},
University of Chicago Press, 1964.

\bibitem{Luczak}
T. {\L}uczak,
\emph{On the fractional dimension of sets of continued fractions,
Mathematika}, 44(1) (1997), 50-53.

%\bibitem{lesPW99} M. Pollicott, H. Weiss,
%\emph{Multifractal analysis of Lyapunov exponent for continued fraction
%and Manneville-Pomeau transformations and applications to Diophantine approximation},
%Comm. Math. Phys. 207 (1999), 145--171.

\bibitem{PS72} G. P\'{o}lya, G. Szeg\H{o},
\emph{Problems and Theorems in Analysis Vol. I}, Springer-Verlag, Berlin, 1972.


\bibitem{RH55} H. Robbins, {\it A remark on Stirling's formula},
Amer. Math. Monthly., 62 (1955), 26-29.



\bibitem{STZ24} K. K. Song, X. Y. Tan and Z. L. Zhang,
\emph{Irrationality exponent and convergence exponent in continued fraction expansions},
Nonlinearity, 37 (2024), 025014.

\bibitem{Wangwu}
B. W. Wang and J. Wu,
\emph{Hausdorff dimension of certain sets arising in continued fraction expansions},
Adv. Math. 218(5)(2008), 1319-1339.

\bibitem{Wu06}
J. Wu,
\emph{A remark on the growth of the denominators of convergents},
Monatsh. Math., 147(3) (2006), 259-264.

\bibitem{ZhangLT}
Z. Z. Zhang, X. Liao and X. Y. Tan,
\emph{The convergence exponent and the well approximated sets for L\"uroth expansion},
J. Math. Anal. Appl., 535 (2024), 128217.

\end{thebibliography}
\end{document}